\newtheorem{defi}{Definition}[section]
\newtheorem{lemma}[defi]{Lemma}
\newtheorem{theorem}[defi]{Theorem}
\newtheorem{proposition}[defi]{Proposition}
\newtheorem{remark}[defi]{Remark}
\renewcommand{\i}{\mathrm{i}}
\renewcommand{\d}[1]{\,\mathrm{d}#1 \,}
\newcommand{\ol}[1]{\overline{#1}}
\newcommand{\K}{\mathcal{K}}
\newcommand{\epsr}{\epsilon_{\mathrm{r}}}
\renewcommand{\epsilon}{\varepsilon}
\newcommand{\loc}{\mathrm{loc}}
\newcommand{\per}{\mathrm{per}}
\newcommand{\smooth}{\text{sm}}
\newcommand{\RR}{R}
\newcommand{\T}{\mathcal{T}}
\newcommand{\F}{\mathcal{F}}
\DeclareMathOperator{\supp}{\mathrm{supp}}
\renewcommand{\Re}{\mathrm{Re}\,}
\renewcommand{\Im}{\mathrm{Im}\,}
\DeclareMathOperator{\curl}{curl}
\renewcommand{\div}{\mathrm{div} \,}
\newcommand*{\N}{\ensuremath{\mathbb{N}}}
\newcommand*{\Z}{\ensuremath{\mathbb{Z}}}
\newcommand*{\R}{\ensuremath{\mathbb{R}}}
\newcommand*{\C}{\ensuremath{\mathbb{C}}}
\begin{document}
\sloppy

\title{A Trigonometric Galerkin Method for Volume Integral Equations Arising in TM Grating Scattering}
\author{Armin Lechleiter\thanks{Center for Industrial Mathematics, University of Bremen, 28359 Bremen, Germany}
\and Dinh-Liem Nguyen\thanks{DEFI, INRIA Saclay--Ile-de-France and Ecole Polytechnique, 91128 Palaiseau, France}} 

\maketitle

\begin{abstract}
  Transverse magnetic (TM) scattering of an electromagnetic wave 
  from a periodic dielectric diffraction grating 
  can mathematically be described by a volume integral equation. 
  This volume integral equation, however, in general  
  fails to feature a weakly singular integral operator.
  Nevertheless, after a suitable periodization, the involved integral operator can be 
  efficiently evaluated on trigonometric polynomials using the 
  fast Fourier transform (FFT) and iterative methods can be used to solve 
  the integral equation. Using Fredholm theory, we prove that a trigonometric 
  Galerkin discretization applied to the periodized integral equation converges 
  with optimal order to the solution of the scattering problem. 
%   Our results cover in particular the 
%   case that the grating consists of a negative-index material for which 
%   the principal part of the differential operator changes sign. 
  The main advantage of this FFT-based discretization scheme is that the 
  resulting numerical method is particularly easy to implement, avoiding for 
  instance the need to evaluate quasiperiodic Green's functions.  
  %as we show via fully discrete formulas.
  %Moreover, it avoids   
\end{abstract}

\section{Introduction}

Periodic dielectric structures are important ingredients for 
modern optical technologies, serving as beam splitters, lenses, 
monochromators, and spectrometers. 
% The abilities of such structures
% become even wider when they do not consist of natural materials but 
% of artificial (meta-)materials with negative refractive index, see, 
% e.g.,~\cite{Dobso2009, Shala2007,Wegen2006}. 
Simulation of electromagnetic fields in such periodic structures is a 
challenging task, since the wave field oscillates in an 
unbounded domain, since the quasi-periodicity needs to be taken into 
account, and since evanescent waves arise around the structure. 
Hence, it might be difficult to use, e.g., a 
standard finite element software for the simulation of wave fields
in such structures. For this reason, this paper presents a simple-to-implement 
volume integral equation solver for this simulation task.

We consider scattering of time-harmonic electromagnetic 
waves from diffraction gratings, three dimensional dielectrics 
that are periodic in one spatial direction and invariant in a second, 
orthogonal, direction (compare Figure~\ref{fig:0}). If the incident wave 
is a transverse-magnetic (TM) wave, the electromagnetic field 
can be described by the scalar equation 
\begin{equation}
  \label{eq:basic}
  \div ( a \nabla u) + k^2  u = 0,
\end{equation}
with wave number $k>0$, see, e.g.,~\cite{Nedel2001}. 
The real material parameter $a$ is in this paper assumed to be scalar, positive 
and possibly discontinuous. This periodic scattering problem can be equivalently 
reformulated as a volume integral equation that is formally of the second kind. 
However, since the coefficient $a$ in~\eqref{eq:basic} appears in the highest-order 
term, the integral operator of this volume integral equation fails to be compact unless 
$a$ is not globally smooth (compare, for the case of Maxwell's equations,~\cite[Chapter 9.2]{Colto1992a}). 
The aim of this paper is to analyze the convergence of a trigonometric Galerkin 
discretization of this volume integral equation for discontinuous material parameter $a$. 
This analysis will be partly based on (purely analytic) results from the 
paper~\cite{Lechl2012a}. Here, we adapt the volume integral equations corresponding 
to~\eqref{eq:basic} such that they can be numerically treated via an FFT-based approach. 
This resulting numerical scheme can be rigorously shown to be (quasi-optimally) convergent.
We provide fully discrete formulas for the implementation of the scheme together with computational examples. 

\begin{figure}[h!!!!tb]
  \begin{center}
    \psfrag{x1}{$x_1$}
    \psfrag{x2}{$x_2$}
    \psfrag{x3}{\hspace*{-3mm}$x_3$}
    \includegraphics[width=7cm]{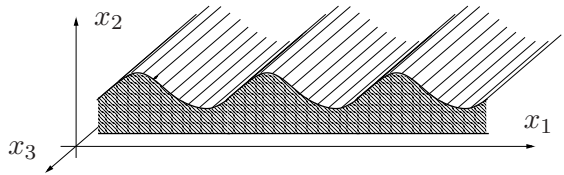}
    \caption{Sketch of the diffraction grating under consideration.}
  \label{fig:0}
  \end{center}
\end{figure}

It might seem inappropriate to consider the TM mode equation~\eqref{eq:basic}, since 
the corresponding transverse electric (TE) mode yields the well-known Lippmann-Schwinger 
integral equation that features a weakly singular integral operator. Indeed, the numerical 
scheme from~\cite{Vaini2000} for the Lippmann-Schwinger equation inspired the scheme 
we develop here. However, if materials feature both dielectric and magnetic 
contrast then highest-order coefficients cannot be avoided even in the TE or TM mode 
problems. Note that it would not be too difficult to construct numerical schemes for the 
simulation of such materials by combining the one from this paper with, e.g., schemes 
developed earlier for the Lippmann-Schwinger equation.

Volume integral equations are a standard numerical tool in 
the engineering community to solve scattering problems numerically, see, 
e.g.,~\cite{Richm1965, Richm1966, Zwamb1992, Kottm2000, Ewe2007}. 
The linear system resulting from the 
discretization of the integral operator (usually done by collocation  
or finite element methods) is large and dense. Fortunately, the convolution 
structure of the integral operator allows to compute matrix-vector 
multiplications by the FFT in an order-optimal way 
(up to logarithmic terms), see, e.g.,~\cite{Zwamb1992, Rahol1996}, at
least if the discretization respects this convolution structure. 
This partly explains the success of such methods in applications.
However, the discretization of the integral operator itself is at least in 
some works done in a mathematically crude way and a rigorous convergence 
analysis for the different discretization techniques is usually missing. 

Despite their relevance in applications, volume integral equations 
featuring \emph{strongly singular}  integral operators (i.e., integral operators that 
fail to be weakly singular) are a recent analytic research subject in mathematics, 
see, e.g.,~\cite{Potth1999, Kirsc2009, Costa2010, Costa2011, Lechl2012a}. 
In particular, the numerical analysis of practically feasible discretization 
methods based on these equations seems to be in a somewhat premature stage.
Of course, one reason for this phenomenon is that for many relevant 
material configurations, the need for discretizing a strongly singular 
volume integral equation can be avoided. For example, whenever material 
parameters are piecewise constant, boundary integral equations  
are a powerful alternative to the volumetric approach, see, e.g.,~\cite{Otani2009}
for a recent reference dealing with a periodic scattering problem. 
If the material parameters fail to be piecewise constant, an important approach 
to avoid the discretization of strongly singular  
integral operators is to combine volume and surface integral operators. 
For the full Maxwell's equations in free space, the analytic equivalence of both 
the volume integral equation and the coupled system of weakly singular volume 
and surface integral operators has been worked out in detail in~\cite{Costa2010}. 

However, whenever using (possibly coupled) boundary integral equations 
one usually needs to be able to rapidly and accurately evaluate the 
underlying Green's function. It is well-known that this is a non-trivial task 
for (quasi-)periodic Green's functions, see, e.g.~\cite{Linto1998}, 
becoming even more difficult if additionally multi-pole expansions are 
used as in~\cite{Otani2009}. The numerical scheme presented here does not require 
to evaluate Green's functions and it is in principle applicable to arbitrary 
varying material parameters. However, the scheme explicitly requires the 
(two-dimensional) Fourier coefficients of the material parameter. According to 
our experience, the accuracy of computational results improves considerably 
if these coefficients can be computed analytically, or at least be reduced to some 
semi-analytic form that can easily be treated numerically with high accuracy. The latter 
is for instance the case for piecewise polynomial or trigonometric material parameters, 
as we illustrate through examples in the last section. 

Our numerical analysis of a trigonometric Galerkin discretization  
applied to the volume integral equation relies in parts on 
G\r{a}rding inequalities that we proved in~\cite{Lechl2012a}. 
Of course, these inequalities would in principle directly 
justify any Galerkin discretization of the integral equation. 
However, such a discretization does generally not profit from 
the above-described advantages arising from the convolution 
structure of the integral operator, the related diagonalization 
of the operator on trigonometric polynomials, and the possibility 
of rapidly evaluating the integral operator using the FFT. 
Additionally, when discretizing the integral operator using finite 
elements, the strong singularity of the kernel makes the computation 
of the diagonal of the system matrix challenging, see~\cite{Kone2010}. 
To this end, we first periodize the integral operator before discretizing, 
using a technique that was (up to a smoothing procedure) analogously used in~\cite{Vaini2000}.
The periodized operator is then easily evaluated spectrally, 
since one can (almost) explicitly compute its Fourier coefficients
(see~\eqref{eq:evaluateL}). Due to the lack of compactness of the integral 
operator it seems difficult to analyze collocation discretizations as it 
was originally done in~\cite{Vaini2000}. However, it is still 
possible to fully analyze a Galerkin discretization 
(see Proposition~\ref{th:convergence}). 

In essence, the advantage of this trigonometric Galerkin discretization  
is that it is particularly simple to implement -- the core of our implementation 
takes less than 70 lines in MATLAB -- and that the linear system can 
be evaluated at FFT speed. By using relatively simple parallelization 
techniques on modern multi-core processors this allows to evaluate 
the integral operator rapidly (MATLAB even automatically uses 
parallelized FFTW routines~\cite{Frigo2005}). Additionally, the FFT-based 
method requires no evaluation of the quasiperiodic Green's function 
or of its partial derivatives. Due to the slow convergence of standard 
expressions of this Green's function, sophisticated techniques like 
Ewald summation need to be used to accurately evaluate them. 
Of course, the price to pay for these advantages is that the convergence 
order of this FFT-based method is low if the medium has jumps, 
due to the use of global trigonometric basis functions 
(otherwise the method is high-order convergent). Nevertheless, if one is
merely interested in obtaining a moderately accurate solution 
without investing much implementation work, we are convinced 
that the method presented here is an interesting simulation technique. 
This technique could be further improved by using non-uniform FFTs that 
allow some refinement of the underlying grid of the FFT close to edges of 
the structure, for instance. See, e.g.,~\cite{Nie2005, Zhang2002} for references 
on non-uniform FFTs and their use to solve volume integral equations.

%Nevertheless, the technique is an interesting tool for numerical 
%simulation; for instance, a commonly employed error criterion 
%for the Rayleigh coefficients corresponding to propagating modes 
%suggests that these coefficients converge super-algebraically. 
% 
% Our numerical analysis is also valid if the refractive index changes 
% sign. This is due to an application of the $T$-coercivity theory, 
% see~\cite{Bonne2010, Bonne2011}, to 
% volume integral equations, see~\cite{Lechl2012}.
% We would like to point out that the paper~\cite{Bonne2010} analyzes 
% finite element discretizations of variational formulations of 
% Laplace-type problems with indefinite principal coefficients using 
% extension operators that map finite element spaces into itself. Our 
% analysis is based on similar extension operators, but clearly those 
% cannot map a space of trigonometric polynomials into itself. 
% 
% To this end, we provide a new and general proof that simple Galerkin 
% methods applied to indefinite variational problem converge. This 
% general result (see Proposition~\ref{th:reduceConvergence}) might 
% turn out to be useful for other indefinite problems as well. 

The rest of this paper is organized as follows: In Section~\ref{se:direct} we briefly 
recall the volumetric integral equation for the direct scattering problem 
and the corresponding G\r{a}rding inequality from~\cite{Lechl2012a}. 
In Section~\ref{se:periodize} we periodize the volume integral equation 
such that it is suitable for a fast FFT-based discretization on biperiodic trigonometric 
polynomials. We also prove the necessary G\r{a}rding inequalities for the periodized system
(see Theorem~\ref{th:PeriodicGarding}). These inequalities are naturally the 
basis for quasi-optimal error estimates for the trigonometric Galerkin 
discretization in Section~\ref{se:discretization}. Finally, Section~\ref{se:numerics}
contains several illustrative numerical examples.

\emph{Notation:} $L^2$-based Sobolev spaces on a domain 
$D$ are denoted as $H^s(D)$, $s \in \R$, and 
$C^{m,1}(\ol{D})$ is the usual space of Lipschitz continuous 
functions that possess Lipschitz continuous partial derivatives 
up to order $m$. 
Further, $H^s_{\loc}(D) = \{ v\in H^s(B) \text{ for all open balls } B \subset D \}$.
The trace of a function $u$ on the boundary $\partial D$ from the 
outside and from the inside of $D$ is denoted as $\gamma_{\mathrm{ext}}(u)$ and 
$\gamma_{\mathrm{int}}(u)$, respectively. The jump of $u$ across $\partial D$ is 
$[u]_{\partial D} = \gamma_{\mathrm{ext}}(u) - \gamma_{\mathrm{int}}(u)$.
If the exterior and the interior trace of a function $u$
coincide, then we simply write $\gamma(u)$ for the trace.

\section{Problem Setting and Known Results}
\label{se:direct}

Propagation of time-harmonic electromagnetic waves in an inhomogeneous,
isotropic, and lossless medium is described by the Maxwell's equations for the 
electric and magnetic fields $E$ and $H$, respectively, 
$ \curl H + \i\omega\varepsilon E = 0$ and $\curl E - \i\omega\mu_0 H = 0$ in $\R^3$.
Here, $\omega>0$ denotes the frequency, $\varepsilon$ is the positive electric 
permittivity and $\mu_0$ is the (constant and positive) magnetic permeability. 
We assume in this paper that the scalar function $\epsilon$ is independent of 
the third variable $x_3$, and $2\pi$-periodic in the first variable $x_1$. Further, 
we suppose that $\epsilon$ equals a constant $\epsilon_0>0$ outside the 
grating structure. 

If an incident electromagnetic plane wave independent of 
the third variable $x_3$ illuminates the grating, then the
Maxwell's equations for the total wave field decouple into 
two scalar partial differential equations. In particular, the 
third component $H_3$ of the magnetic field satisfies 
\begin{equation}
 \label{eq:HMode}
 \div \left( \epsr^{-1} \nabla u\right) + k^2 u = 0
 \qquad \text{with }
 \epsr := \epsilon / \epsilon_0
 \text{ and } k := \omega\sqrt{\epsilon_0 \mu_0} > 0 ,
\end{equation}
together with jump conditions on interfaces where the 
refractive index $\epsr^{-1}$ jumps: $u$ and 
$\epsr^{-1} \, \partial u / \partial \nu$ are 
continuous across such interfaces. Note that 
$\epsilon_r$ is $2\pi$-periodic in $x_1$. 
We assume that the contrast $q := \epsr^{-1} - 1$ 
has support in $\{ |x_2| < \rho \}$ for some constant 
$\rho>0$. 

Consider now a plane incident wave  $u^i(x) = \exp(\i k \, x \cdot d)
= \exp(\i k(x_1d_1 + x_2d_2))$ where $|d|=1$ and $d_2 \not = 0$.
When $u^i$ illuminates the diffraction 
grating there arises a scattered field $u^s$ such that the 
total field $u=u^i+u^s$ satisfies~\eqref{eq:HMode}, that is, 
the scattered field satisfies
\begin{equation}
 \label{eq:HmodeEquation}
 \div( \epsr^{-1} \nabla u^s) + k^2 u^s = -\div(q \nabla u^i)
 \quad \text{in } \R^2.
\end{equation}
Note that $u^i$ is $\alpha$-quasi-periodic with respect to $x_1$, 
\[
 u^i(x_1 + 2\pi,x_2) = e^{2\pi \i \alpha} u^i(x_1,x_2)
 \qquad \text{for $\alpha: = kd_1$.}
\]
Since $\epsr$ is periodic we seek for a scattered field that is 
$\alpha$-quasi-periodic in $x_1$, too. For uniqueness of solution  
we require that $u^s$ above (below) the dielectric structure can be
represented by a uniformly converging Rayleigh
series consisting of upwards (downwards) propagating or
evanescent plane waves,
\begin{equation}
  \label{eq:RayleighCondition}
  u^s(x) = \sum_{j \in \Z} \hat{u}^\pm_j
  e^{\i\alpha_jx_1 \pm \i\beta_j (x_2\mp\rho)}, \quad x_2 \gtrless \pm\rho,
  \qquad \alpha_j := j + \alpha, \quad \beta_j := \sqrt{k^2-\alpha^2_j}.
\end{equation}
The square root used to define 
\[
  \beta_j = \sqrt{k^2-\alpha^2_j} := 
  \begin{cases} 
    (k^2-\alpha^2_j)^{1/2}, & k^2 \geq \alpha_j^2, \\ 
    \i (\alpha^2_j - k^2)^{1/2}, & k^2 < \alpha_j^2,
  \end{cases}, \qquad 
  j \in \Z,
\]
is chosen such that $\Im (\beta_j) \geq 0$ always. 
Further, the so-called Rayleigh coefficients $\hat{u}^{\pm}_j$ 
of the scattered wave in~\eqref{eq:RayleighCondition} 
have explicit representations, 
\[
  \hat{u}^{\pm}_j = \frac{1}{2\pi}
    \int_{-\pi}^{\pi} u^s(x_1,\pm \rho) e^{-\i\alpha_j x_1} \d{x_1}, \qquad j \in \Z.
\]
Note that we call a solution to the Helmholtz equation \emph{radiating}   
if it satisfies~\eqref{eq:RayleighCondition}. 

By $G_{\alpha}$ we denote the Green's function to the
$\alpha$-quasi-periodic Helmholtz equation in $\R^2$, see, e.g.,~\cite[Eq.~(2.13)]{Linto1998}. 
In this paper,
\begin{equation}
  \label{eq:nonResonance}
  \text{ we always suppose that } \quad 
  k^2 \neq \alpha^2_j \qquad \text{for all } j \in \Z,
\end{equation}
which implies that this Green's function has the series representation 
\begin{equation}
  \label{eq:GkAlpha}
  G_{\alpha}(x)
  := \frac{\i}{4\pi} \sum_{j \in \Z}
  \frac{1}{\beta_j} \exp(\i\alpha_j x_1 + \i\beta_j |x_2|)
  \quad \text{for }  x= \left( \begin{matrix} x_1 \\ x_2 \end{matrix} \right), \ 
  x \not = \left( \begin{matrix} 2\pi m \\ 0 \end{matrix} \right), \ m\in\Z.
\end{equation}
Note that~\eqref{eq:nonResonance} implies that all the 
$\beta_j=(k^2-\alpha^2_j)^{1/2}$ are non-zero, and that the 
Green's function is well-defined, see again~\cite{Linto1998}.  

\begin{remark}[Failure at Wood's anomalies]
  \label{eq:wood}
  The phenomenon that condition~\eqref{eq:nonResonance} fails to hold for some $k>0$ 
  is called a Wood's anomaly, see, e.g.,~\cite{Barne2011}. At a Wood's anomaly, the 
  representation~\eqref{eq:GkAlpha} is obviously not well-defined. Image-like 
  representations of the Green's function would (at least formally) be well-defined, see, 
  e.g.,~\cite[Eq.~(2.7)]{Linto1998} for an example. Hence, it might seem as if there was 
  a chance that the method presented in this paper works at Wood's anomalies.
  However, by carefully checking Lemma~\ref{th:boundKh} below one notes that this is 
  not the case since certain Fourier coefficients (denoted by $\hat\K_\rho$ later on) 
  are not well-defined at Wood's anomalies. 
\end{remark}

We introduce the strip $\Omega := (-\pi,\pi)\times\R$ and set 
\[
  \Omega_\RR := (-\pi,\pi) \times (-\RR, \RR) \quad \text{for } \RR>0.
\]
Moreover, we set  
$H^\ell_{\alpha}(\Omega_\RR) := \{u\in H^\ell(\Omega_\RR): \,  u=U|_{\Omega_\RR}
  \text{ for some } \alpha\text{-quasi-periodic } U \in H^\ell_{\loc}(\R^2) \}$ 
  for $\ell \in \N$ and $R>0$, and $H^1_\alpha(\Omega)$  
is defined analogously. For any Lipschitz domain $D$ (see~\cite{McLea2000} 
for a definition), the space $L^2(D, \C^2)$ contains all square integrable functions 
with values in $\C^2$ (complex column vectors with two components).  

\begin{lemma}[Lemmas 5 and 6 in~\cite{Lechl2012a}]
  \label{th:H2smoothness}
  If $D \subset \Omega$ is a Lipschitz domain, then the volume potential
  \[
    (V f)(x) = \int_{D} G_{\alpha}(x-y) f(y) \d{y},
    \quad x \in \Omega_\RR,
  \]
  is bounded from $L^2(D)$ into $H^2_\alpha(\Omega_\RR)$
  for all $\RR>0$.
  For $g \in L^2(D, \C^2)$ the potential
  $w = \div V g$ belongs to $H^1_\alpha(\Omega_\RR)$
  for all $\RR>0$. It is the unique
  radiating weak solution to $\Delta w + k^2 w = -\div g$
  in $\Omega$, that is, it satisfies the Rayleigh
  expansion condition~\eqref{eq:RayleighCondition}, and
  \begin{equation}
    %\label{eq:pde}
    \int_{\Omega} (\nabla w\cdot\nabla \ol{v} - k^2 w\ol{v})\d{x}
    = -\int_{D} g\cdot\nabla\ol{v} \d{x} 
    \quad 
    \text{for all $v \in H^1_\alpha(\Omega)$ with compact support.}
  \end{equation}
\end{lemma}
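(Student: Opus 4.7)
My plan is to (i) establish the $H^2_\alpha$-mapping property of $V$ by combining the fundamental-solution property of $G_\alpha$ with classical elliptic regularity, (ii) deduce the $H^1_\alpha$-statement for $w=\div Vg$ componentwise, (iii) verify the weak PDE via the distributional identity $(\Delta+k^2)V\phi=-\phi$ together with a quasi-periodic integration by parts, (iv) read off the Rayleigh expansion directly from~\eqref{eq:GkAlpha}, and (v) close with the classical uniqueness argument for radiating quasi-periodic Helmholtz solutions.

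For (i), the fundamental-solution property of $G_\alpha$ yields $(\Delta+k^2)Vf=-f$ distributionally in $\Omega$ for $f\in L^2(D)\subset L^2(\Omega)$, so interior elliptic regularity for the Helmholtz operator gives $Vf\in H^2_{\loc}(\Omega)$, and $Vf$ is automatically $\alpha$-quasi-periodic in $x_1$ (inherited from $G_\alpha$). The quantitative norm estimate $\|Vf\|_{H^2_\alpha(\Omega_R)}\le C(R)\|f\|_{L^2(D)}$ can be obtained via the splitting $G_\alpha=\Phi_k+G_\alpha^{\smooth}$, where $\Phi_k$ is the free-space $2$D Helmholtz fundamental solution. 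Convolution with $\Phi_k$ is bounded from $L^2(D)$ into $H^2_{\loc}(\R^2)$ by the standard Newton-type Helmholtz volume-potential estimate, while the image-sum correction $G_\alpha^{\smooth}$, despite its singularities at $(2\pi m,0)$, $m\ne 0$, produces only a bounded contribution to $H^2_\alpha(\Omega_R)$ since the neighbouring image singularities can be controlled by quasi-periodic reduction to the $m=0$ case. Applying this componentwise to $g\in L^2(D,\C^2)$ yields $Vg\in H^2_\alpha(\Omega_R,\C^2)$, hence $w=\div Vg\in H^1_\alpha(\Omega_R)$, which settles (i) and (ii).

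For (iii)--(v), the distributional identity $(\Delta+k^2)Vg_i=-g_i$ on $\Omega$, $i=1,2$, is immediate from the fundamental-solution property of $G_\alpha$. Testing against $\partial_i\ol v$ for $v\in H^1_\alpha(\Omega)$ compactly supported, summing over $i$, and integrating by parts once yields the stated variational identity; the lateral boundary terms on $\{x_1=\pm\pi\}$ cancel by the shared $\alpha$-quasi-periodicity of $Vg$ and $v$, while the horizontal ones vanish by the compact support of $v$. For (iv), since $\supp g\subset\{|y_2|\le\rho\}$, on $\{|x_2|>\rho\}$ the distance $|x_2-y_2|$ is bounded below, so the series~\eqref{eq:GkAlpha} for $G_\alpha(x-y)$ and all its derivatives converge uniformly; exchanging sum and integral in the definition of $Vg$ and then applying $\div$ termwise gives the Rayleigh expansion~\eqref{eq:RayleighCondition} for $w$. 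For (v), any two radiating weak solutions differ by a radiating $\tilde w$ solving $\Delta\tilde w+k^2\tilde w=0$; a Green's-identity computation on $\Omega_R$ together with Parseval on $\{x_2=\pm R\}$ and $\Im\beta_j\ge 0$ forces each Rayleigh coefficient of $\tilde w$ to vanish under~\eqref{eq:nonResonance}. The main obstacle is (i), where one must cleanly separate the local Newton-potential singularity of $G_\alpha$ from the quasi-periodic image structure to get the $H^2$ estimate; once this splitting-plus-regularity argument is in place, the remaining steps are essentially routine.
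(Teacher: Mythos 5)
The paper does not prove this lemma: as the bracketed header indicates, it is imported verbatim from Lemmas~5 and~6 of \cite{Lechl2012a} and used as a black box, so there is no in-paper argument against which to compare your proposal.

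Taken on its own terms, your sketch is a workable route, and the one step that needs tightening is exactly the one you flag. The remainder $G_\alpha^{\smooth}:=G_\alpha-\Phi_k$ (with $\Phi_k$ the two-dimensional free-space Helmholtz fundamental solution) is \emph{not} smooth on the set you actually need: for $x\in\Omega_\RR$ and $y\in D$ the difference $x-y$ ranges over $(-2\pi,2\pi)\times(-\RR-\rho,\RR+\rho)$, which contains the image points $(\pm2\pi,0)$ where $G_\alpha^{\smooth}$ still has a logarithmic singularity. ``Quasi-periodic reduction to $m=0$'' is the right intuition but not yet an estimate; the clean fix is to peel off the three nearest images $m\in\{-1,0,1\}$, each a phase-shifted translate of $\Phi_k$ and hence each covered by the classical $L^2\to H^2_{\loc}$ Newton-potential bound, so that the remaining image series is genuinely $C^\infty$ on the relevant product domain. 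A second small caveat: if you instead argue via interior elliptic regularity for $(\Delta+k^2)Vf=-f$ on the quasi-periodic extension to $\R^2$, the quantitative bound $\|Vf\|_{H^2(\Omega_\RR)}\le C\|f\|_{L^2(D)}$ also requires an a~priori $L^2$ control of $Vf$ on a slightly enlarged strip, which the Newton-potential splitting supplies directly but local regularity alone does not. Steps (ii)--(v) are routine once (i) is in place, and you handle them correctly: the distributional identity plus one integration by parts (with the lateral terms cancelling by quasi-periodicity and the horizontal ones by compact support) gives the weak form; term-by-term differentiation of~\eqref{eq:GkAlpha} on $\{|x_2|\ge\rho+\epsilon\}$ is justified since $\Im\beta_j\to\infty$ makes the factors $e^{\i\beta_j(|x_2|-|y_2|)}$ decay geometrically in $|j|$; and the uniqueness argument via the Rayleigh-coefficient energy identity on $\Gamma_{\pm R}$ under~\eqref{eq:nonResonance} is the standard one.
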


Let us now come back to the differential equation~\eqref{eq:HmodeEquation}
for the scattered field $u^s$. Recall that we assumed that the contrast 
$q = \epsr^{-1}-1$ has support in $\{ |x_2| < \rho \}$ for some $\rho>0$.
We denote this support (restricted to one period $- \pi < x_1 < \pi$) by 
\[
   \ol{D} = \supp(q)
\]
and suppose from now on that $D$ is a Lipschitz domain. 
If we choose $\RR>\rho$, then $\ol{D} \subset \Omega_\RR$. 
Moreover, by setting $f=q\nabla u^i$ in~\eqref{eq:HmodeEquation} 
the variational formulation of~\eqref{eq:HmodeEquation} reads 
\begin{equation}
  \label{eq:variationalForm}
  \int_{\Omega} (\nabla u^s \cdot \nabla\ol{v} - k^2 u^s \ol{v}) \d{x}
  = -\int_{D} (q \nabla u^s + f) \cdot \nabla \ol{v} \d{x}
\end{equation}
for all $v \in H^1_\alpha(\Omega)$ with compact
support in $\ol{\Omega}$. 
From Lemma~\ref{th:H2smoothness} we know that
the radiating solution to this problem is given by
$u^s = \div V(q\nabla u^s + f)$. 
%The last formula implies that 
%the restriction of $u$ to $D$ is sufficient to determine $u$ 
%in $\Omega \setminus \ol{D}$ by integration. 
If we define the bounded linear operator 
\[
  L: \, L^2(D, \C^2) \to H^1_\alpha(D), \quad f \mapsto \div V f,
\] 
then the scattered field $u^s$, solution to~\eqref{eq:HmodeEquation}, 
hence solves the volume integral equation
\begin{equation}
  \label{eq:lippmannD}
  u^s -  L(q \nabla u^s) = L(f) \quad \text{in } H^1_\alpha(D)
\end{equation}
for $f=q\nabla u^i$. The operator on the left of the last equation 
satisfies a G\r{a}rding inequality. 
% To state this inequality, we need to 
% introduce a bounded, linear, and periodic extension operator
% \[
%   E: \, H^1_{\alpha}(D) \to H^1_{\alpha}(\Omega),
%   \quad \left. E(u) \right|_{D} = u, 
%   \quad \left. E(u) \right|_{\Omega \setminus \Omega_{2\rho}} = 0,
% \]
% see Appendix~\ref{se:extension}. Note that the operator norm of $E$ is
% always larger than one, since $E(u) = u$ on $D$. 

\begin{theorem}[Theorem 16 in~\cite{Lechl2012a}]
\label{th:FreeGarding}
Assume that $q \geq q_0 >0$ in $D$,  that 
 $\sqrt{q} \in C^{2,1}(\ol{D})$, and that 
$D$ is of class $C^{2,1}$. There exists a compact operator $K$ on $H^1_\alpha(D)$ such that
\begin{equation*}
  \Re\langle v-L(q\nabla v),v\rangle_{H^1_\alpha(D)} \geq \|v\|^2_{H^1_\alpha(D)}
  - \Re\langle K v, \, v\rangle_{H^1_\alpha(D)}, \qquad v \in H^1_\alpha(D).
\end{equation*}
% (b) If $q<0$, if $\| E \|^2_{H^1_\alpha(D) \to H^1_\alpha(\Omega_{2\rho})} < \inf_D |q|$,
% and if we choose $C>0$ such that $0 < C < \inf_D |q| / \| E \|^2_{H^1_\alpha(D) \to H^1_\alpha(\Omega_{2\rho})} \, -1$, 
% then there exists a compact operator $K_-$ on $H^1_\alpha(D)$ (depending on $C$)  such that
% \begin{equation*}
%   - \Re\langle v-L(q\nabla v),v\rangle_{H^1_\alpha(D)} \geq C \|v\|^2_{H^1_\alpha(D)}
%   - \Re\langle K_-v, v\rangle_{H^1_\alpha(D)}, \qquad v \in H^1_\alpha(D).
% \end{equation*}
\end{theorem}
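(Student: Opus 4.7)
\emph{Plan of proof.} The strategy is to reduce the claimed inequality to a weak-form computation for the potential $w := L(q\nabla v)$, combined with the pointwise coercivity $1+q \geq 1$. By density of $C^\infty_\alpha(\ol D)$ in $H^1_\alpha(D)$ and the continuity of $L$ from Lemma~\ref{th:H2smoothness}, it suffices to establish the inequality for smooth $v$ and then pass to the limit.

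For smooth $v$, Lemma~\ref{th:H2smoothness} identifies $w$ as the unique radiating weak solution in $H^1_\alpha(\Omega_R)$ of $\Delta w + k^2 w = -\div(q\nabla v)$ on $\Omega$, with $q\nabla v$ understood as extended by zero outside $D$. Using a bounded extension operator together with a smooth $x_2$-cutoff, I would construct a compactly supported test function $\tilde v \in H^1_\alpha(\Omega)$ satisfying $\tilde v|_D = v$, and test the weak form of the PDE for $w$ against $\tilde v$. Splitting the integrals over $D$ and $\Omega \setminus \ol D$ and applying integration by parts on $\Omega \setminus \ol D$, where $w$ solves the homogeneous Helmholtz equation, the exterior contribution collapses into a boundary integral over $\partial D$ and one arrives at
\begin{equation*}
\Re\langle v - L(q\nabla v), v\rangle_{H^1_\alpha(D)} = \int_D |v|^2\, dx + \int_D (1+q)|\nabla v|^2\, dx - (1+k^2)\Re\!\int_D w\,\overline{v}\, dx - \Re\!\int_{\partial D} \gamma_{\mathrm{ext}}(\partial_\nu w)\, \overline{v}\, ds.
\end{equation*}
Since $1+q \geq 1$, the two bulk integrals already bound $\|v\|^2_{H^1_\alpha(D)}$ from below. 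It thus remains to show that the last two terms combine into $-\Re\langle K v, v\rangle_{H^1_\alpha(D)}$ for some compact operator $K$.

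The $L^2$-pairing $\int_D w\overline v\, dx$ is straightforwardly a compact sesquilinear form on $H^1_\alpha(D)\times H^1_\alpha(D)$, because $v \mapsto w$ maps $H^1_\alpha(D)$ boundedly into $H^1_\alpha(\Omega_R)$ by Lemma~\ref{th:H2smoothness}, and composition with the Rellich embedding $H^1\hookrightarrow L^2$ into $L^2(D)$ is compact.

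The main obstacle will be the boundary contribution. The natural mapping properties only place $\gamma_{\mathrm{ext}}(\partial_\nu w)$ in $H^{-1/2}(\partial D)$ against the $H^{1/2}$-trace of $v$, giving boundedness but not compactness of the form. To upgrade to compactness I would exploit the fact that, again by Lemma~\ref{th:H2smoothness}, the \emph{vector} potential $V(q\nabla v)$ lies in $H^2_\alpha(\Omega_R,\C^2)$ -- one degree more regular than $w = \div V(q\nabla v)$ itself. Using the distributional jump relation $[\partial_\nu w]_{\partial D} = q\,\gamma_{\mathrm{int}}(\partial_\nu v)$ that follows from extending $q\nabla v$ by zero across $\partial D$, integration by parts on $\partial D$, and the $C^{2,1}$-regularity of $\partial D$ and of $\sqrt{q}$, the boundary sesquilinear form can be rewritten so that it factors through traces of the $H^2$-regular vector potential lying in $H^{3/2}(\partial D)$. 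The compact embedding $H^{3/2}(\partial D)\hookrightarrow H^{1/2}(\partial D)$ then furnishes the required compactness, and Riesz representation packages all the compact contributions into a single operator $K$ on $H^1_\alpha(D)$.
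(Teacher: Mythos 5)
The paper does not actually supply a proof of Theorem~\ref{th:FreeGarding}; it is cited verbatim from~\cite{Lechl2012a} (Theorem~16 there) and is used as a black box inside the proof of Theorem~\ref{th:PeriodicGarding}. So there is no ``paper's own proof'' to compare against, and I will assess your argument on its own merits, using the proof of the analogous Theorem~\ref{th:PeriodicGarding} as the nearest guide to what a correct argument looks like.

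Your reduction to the identity
\begin{equation*}
\Re\langle v-L(q\nabla v),v\rangle_{H^1_\alpha(D)}
= \int_D |v|^2\,\mathrm{d}x + \int_D (1+q)|\nabla v|^2\,\mathrm{d}x
-(1+k^2)\Re\!\int_D w\,\overline v\,\mathrm{d}x
-\Re\!\int_{\partial D}\gamma_{\mathrm{ext}}(\partial_\nu w)\,\overline v\,\mathrm{d}s
\end{equation*}
with $w=L(q\nabla v)$ is correct, as is the jump relation $[\partial_\nu w]_{\partial D}=q\,\gamma_{\mathrm{int}}(\partial_\nu v)$ and the compactness of the $\int_D w\overline v$ pairing via Rellich. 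The gap is exactly where you flag the ``main obstacle'': the boundary form $\displaystyle (v,v')\mapsto\int_{\partial D}\gamma_{\mathrm{ext}}(\partial_\nu w)\,\overline{v'}\,\mathrm{d}s$ is \emph{not} compact on $H^1_\alpha(D)\times H^1_\alpha(D)$, and the suggested upgrade via $H^2$-regularity of the vector potential does not produce compactness. The reason is structural: $w=\div V(q\nabla v)$, so $\partial_\nu w=\nu\cdot\nabla\div V(q\nabla v)$ involves \emph{two} derivatives of $V(q\nabla v)\in H^2$, hence its trace lives only in $H^{-1/2}(\partial D)$ with norm $\lesssim\|v\|_{H^1(D)}$. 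A tangential integration by parts on $\partial D$ to lower the derivative count on $V$ necessarily transfers a derivative onto $\overline{v'}$, giving an $H^{1/2}\times H^{-1/2}$ pairing again --- bounded, not compact. Equivalently, via the jump relation, $\gamma_{\mathrm{ext}}(\partial_\nu w)$ contains the term $q\,\gamma_{\mathrm{int}}(\partial_\nu v)$, and the pairing $\int_{\partial D}q(\partial_\nu v)\overline v\,\mathrm{d}s$ is a genuinely non-compact (order-zero) boundary form; it does not vanish along high-frequency sequences concentrating at $\partial D$.

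What the actual argument must do --- and what the proof of Theorem~\ref{th:PeriodicGarding} in the paper does for the analogous periodized operator --- is \emph{not} show the boundary contribution compact, but instead decompose it into a \emph{positive} term plus a compact remainder. There the problematic contribution is rewritten via the double-layer potential and its jump relations $\gamma_{\mathrm{int}}(\mathrm{DL}\,\psi)=-\psi/2+T\psi$ with $T$ compact on $H^{1/2}_\alpha(\partial D)$ for $C^{2,1}$ boundaries, and after two integrations by parts one is left with the manifestly nonnegative term $2\int_D|\nabla w|^2/q\,\mathrm{d}x$ plus terms of compact type. The hypotheses $\sqrt{q}\in C^{2,1}(\ol D)$ and $D\in C^{2,1}$ are telling: they are needed precisely to differentiate $q$ (or $q^{-1}$) and to ensure compactness of $T$; if the boundary term were simply compact by trace regularity as you propose, such strong smoothness would be unnecessary. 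So the algebraic reduction in your plan is sound, but the final step requires a sign/compactness \emph{decomposition} of the boundary sesquilinear form rather than a compactness claim, and supplying that decomposition is the real content of the theorem.
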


For a real-valued contrast, uniqueness of solution of the scattering
problem~(\ref{eq:HmodeEquation}-\ref{eq:RayleighCondition}), 
or equivalently of the integral equation~\eqref{eq:lippmannD},
does in general only hold for all but a discrete set of 
positive wave numbers. Uniqueness results either require 
(partially) absorbing materials or non-trapping coniditions on the 
material; examples of such conditions are given, e.g.,  
in~\cite{Bonne1994, Elsch1998}. 

\begin{remark}[Assumption on uniqueness of solution]
In the rest of the paper, 
we always suppose that uniqueness of solution to 
(\ref{eq:HmodeEquation}-\ref{eq:RayleighCondition}) holds.
\end{remark}
 
We restrict our theoretical analysis to real and positive contrasts, 
since G\r{a}rding inequalities corresponding to complex-valued or 
negative contrasts are more involved, see~\cite{Lechl2012a}.
Treating these cases would increase technicalities without adding 
new ideas to the text. 

\section{Periodization of the Integral Equation}
\label{se:periodize}

In this section we periodize the volume integral equation~\eqref{eq:lippmannD} 
and show the equivalence of the periodized equation and the original one. 
The purpose of this periodization is that the resulting
integral operator is, roughly speaking, diagonalized
by trigonometric polynomials. This allows to use fast FFT-based
schemes to discretize the periodized operator and 
iterative schemes to solve the discrete system. 
We also prove G\r{a}rding inequalities for the 
periodized integral equation, which turns out to be 
involved. However, these estimates are crucial to establish 
convergence of the discrete schemes later on. 

Let us again emphasize that we assume in all the paper that 
the non-resonance condition~\eqref{eq:nonResonance} is satisfied, 
which excludes Wood's anomalies.

Since we are interested in spectral schemes 
we define a periodized Green's function, firstly setting
\begin{equation}
  \label{eq:DefineKernel}
  \K_\rho(x) := G_{\alpha}(x),
  \quad x=(x_1,x_2)^\top \in \R \times (-\rho, \rho), \
  x \not = (2\pi m, 0)^\top \text{ for } m\in\Z,
\end{equation}
and secondly extending $\K_\rho(x)$ $2\rho$-periodically
in $x_2$ to $\R^2$. The trigonometric polynomials
\begin{equation}
  \label{eq:trigonometricPolynomial}
  \varphi_j(x)
  := \frac{1}{\sqrt{4\pi\rho}}
  \exp\Big( {\i(j_1 + \alpha)x_1 + \i\frac{j_2\pi}{\rho}x_2}\Big),
  \quad j = (j_1,j_2)^\top \in \Z^2,
\end{equation}
are orthonormal in $L^2(\Omega_{\rho})$. They differ 
from the usual Fourier basis only by 
a phase factor $\exp(\i \alpha x_1)$, and hence also 
form a basis of $L^2(\Omega_{\rho})$. For 
$f\in L^2(\Omega_{\rho})$ and $j =(j_1,j_2)^\top \in \Z^2$,
$\hat{f}(j) := \int_{\Omega_{\rho}} f \, \ol{\varphi_j} \d{x}$
are the Fourier coefficients of $f$.
For $0\leq s<\infty$ we define a fractional Sobolev space 
$H^s_{\per}(\Omega_{\rho})$ as the subspace of
functions in $L^2(\Omega_{\rho})$ such that
\[
  \| f \|^2_{H^s_{\per}(\Omega_{\rho})}
  := \sum_{j \in \Z^2}(1 + |j|^2)^s |\hat{f}(j)|^2 < \infty.
\]
It is well-known that for integer values of $s$, these 
spaces correspond to spaces of $\alpha$-quasi-periodic functions that are
$s$ times weakly differentiable, and that the above norm is then
equivalent to the usual integral norms.

% To compute the Fourier coefficients $\hat{\K}_\rho(j)$
% of the kernel $\K_\rho$, we set 
% \[
%   \lambda_j
%   := k^2 -(j_1 + \alpha)^2 - \bigg( \frac{j_2 \pi}{\rho} \bigg)^{2}
%   \quad \text{for } j\in \Z^2.
% \]

\begin{lemma}[Theorem 2 in~\cite{Lechl2012a}]
  \label{th:boundKh}
  The Fourier coefficients of the kernel $\K_\rho$
  from~\eqref{eq:DefineKernel} are given by
  \[
    \hat\K_\rho(j)
    = \begin{cases}
      \frac{1}{\sqrt{4\pi \rho}} \frac{\cos(j_2 \pi) \exp(\i \beta_{j_1} \rho)-1}
        { k^2 -(j_1 + \alpha)^2 - ( j_2 \pi / \rho)^{2}} &  \text{for }  k^2 \neq (j_1 + \alpha)^2 - \big( \frac{j_2 \pi}{\rho} \big)^{2}, \\[1mm]
      \frac{\i}{4 j_2} \left(\frac{\rho}{\pi}\right)^{3/2} & \text{else},
    \end{cases}
    \qquad j= \left( \begin{matrix} j_1 \\ j_2 \end{matrix} \right) \in \Z^2.
  \]
  The convolution operator $K_\rho$, defined by 
  $(K_\rho f)(x) = \int_{\Omega_{\rho}} \K_\rho(x-y) f(y) \d{y}$
  for $x\in \Omega_{\rho}$, is bounded from 
  $L^2(\Omega_{\rho})$ into $H^2_{\per}(\Omega_{\rho})$.
\end{lemma}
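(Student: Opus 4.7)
The plan is to compute the Fourier coefficients $\hat\K_\rho(j)$ explicitly by inserting the series representation~\eqref{eq:GkAlpha} for $G_\alpha$ into the definition, and then to deduce the mapping property from the resulting decay estimate using the fact that $K_\rho$ acts as a Fourier multiplier. The multiplier property itself, $K_\rho\varphi_j = \sqrt{4\pi\rho}\,\hat\K_\rho(j)\,\varphi_j$, is a routine consequence of the $\alpha$-quasi-periodicity of $\K_\rho$ in $x_1$ (inherited from $G_\alpha$) and the $2\rho$-periodicity in $x_2$ (by construction), both of which match the phases of the basis $\{\varphi_j\}$, so that the convolution $\int_{\Omega_\rho}\K_\rho(x-y)\varphi_j(y)\,dy$ becomes translation-invariant under $y\mapsto x-y$ on the fundamental domain.

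To obtain the explicit formula I would first use $\int_{-\pi}^{\pi} e^{\i(\alpha_l-\alpha_{j_1})x_1}\,dx_1 = 2\pi\,\delta_{l,j_1}$ to collapse the sum over $l$ in~\eqref{eq:GkAlpha} to its $j_1$-term, leaving
\[
  \hat\K_\rho(j) \;=\; \frac{\i}{2\sqrt{4\pi\rho}\,\beta_{j_1}}\int_{-\rho}^{\rho} e^{\i\beta_{j_1}|x_2|\,-\,\i(j_2\pi/\rho)x_2}\,dx_2.
\]
Then I would split the $x_2$-integral at $0$ and substitute $y=-x_2$ on the negative half, producing two elementary exponentials with frequencies $\mu\pm\nu$ where $\mu:=\beta_{j_1}$ and $\nu:=j_2\pi/\rho$. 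Using $e^{\pm\i\nu\rho}=\cos(j_2\pi)$ they share the common numerator $\cos(j_2\pi)e^{\i\beta_{j_1}\rho}-1$, and a partial-fraction combination $\tfrac{1}{\mu-\nu}+\tfrac{1}{\mu+\nu}=\tfrac{2\mu}{\mu^2-\nu^2}$ cancels the prefactor $1/\beta_{j_1}$ and produces the denominator $k^2-(j_1+\alpha)^2-(j_2\pi/\rho)^2$; this gives the first case. The resonant situation $\mu^2=\nu^2$, in which the first formula degenerates, is handled separately: the non-resonance assumption~\eqref{eq:nonResonance} forces $\beta_{j_1}$ to be positive real and $j_2\neq 0$, and a direct evaluation reduces the $x_2$-integral to $\rho$ (because $e^{2\i j_2\pi}=1$ kills one elementary integral and the other equals $\rho$), after which substitution of $\beta_{j_1}=|j_2|\pi/\rho$ yields the second case.

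For the boundedness $K_\rho:L^2(\Omega_\rho)\to H^2_{\per}(\Omega_\rho)$ it then suffices to show the uniform decay $|\hat\K_\rho(j)|\le C/(1+|j|^2)$: in the generic case the numerator is bounded by $2$ (using $|e^{\i\beta_{j_1}\rho}|\le 1$, since $\Im\beta_{j_1}\ge 0$) while the denominator grows like $|j|^2$ as $|j|\to\infty$; the condition~\eqref{eq:nonResonance} together with the discreteness of $\Z^2$ bounds this denominator away from zero outside a finite exceptional set of indices, on which the second formula gives a uniform bound. By the multiplier property this implies
\[
  \|K_\rho f\|_{H^2_{\per}(\Omega_\rho)}^2 \;=\; 4\pi\rho\sum_{j\in\Z^2}(1+|j|^2)^2\,|\hat\K_\rho(j)|^2\,|\hat f(j)|^2 \;\le\; C\,\|f\|_{L^2(\Omega_\rho)}^2.
\]
The main technical nuisance I expect is the interchange of the sum in~\eqref{eq:GkAlpha} with the two-dimensional integral, since that series converges only conditionally near the singular locus $\{x_2=0\}$ of $G_\alpha$; I would justify it by first integrating over the sub-strips $\{\delta<|x_2|<\rho\}$, on which the asymptotics $\beta_l\sim\i|l|$ make the series uniformly and exponentially convergent, and then passing $\delta\to 0^+$ using the local $L^1$-bound on $G_\alpha$ near its logarithmic singularities.
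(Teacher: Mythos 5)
The paper itself supplies no proof here; it simply cites the result to Theorem~2 of~\cite{Lechl2012a}. Your argument is the natural derivation of that theorem and it is essentially correct: the quasi-periodicity in $x_1$ and periodicity in $x_2$ make $K_\rho$ a Fourier multiplier on the basis $\{\varphi_j\}$; inserting the modal series~\eqref{eq:GkAlpha}, doing the $x_1$-integral first kills all modes except $l=j_1$ by orthogonality, and the remaining one-dimensional $x_2$-integral of $e^{\i\beta_{j_1}|x_2|-\i\nu x_2}$ is elementary and yields exactly the partial-fraction simplification you describe, with the resonant case $\beta_{j_1}^2=(j_2\pi/\rho)^2$ handled by noting $e^{\pm 2\i j_2\pi}=1$. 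Two small remarks. First, your worry about exchanging the series with the two-dimensional integral is heavier machinery than needed: if you integrate in $x_1$ first (for each fixed $x_2\ne0$ the series converges absolutely because $\beta_l\sim\i|l|$), the sum collapses to a single continuous, bounded function of $x_2$ on $(-\rho,\rho)$, and the subsequent $x_2$-integral poses no convergence issue; the $\delta$-truncation and $L^1_{\loc}$ bound are therefore not required. Second, one cosmetic point worth flagging: tracking signs in the resonant sub-case $j_2<0$ (where $\beta_{j_1}=-j_2\pi/\rho$) produces $\i/(4|j_2|)$ rather than $\i/(4j_2)$, and the displayed condition in the statement reads ``$k^2\ne(j_1+\alpha)^2-\big(j_2\pi/\rho\big)^2$'' whereas the degenerate case is actually $k^2=(j_1+\alpha)^2+\big(j_2\pi/\rho\big)^2$; both appear to be typographical slips in the quoted statement, not deficiencies in your derivation. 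Your decay estimate $|\hat\K_\rho(j)|\lesssim(1+|j|^2)^{-1}$ (numerator bounded by~$2$ via $\Im\beta_{j_1}\ge0$, denominator $\gtrsim|j|^2$ for $|j|$ large, finite exceptional set handled by the second formula with $j_2\ne0$ guaranteed by~\eqref{eq:nonResonance}) and the resulting multiplier bound giving $L^2\to H^2_{\per}$ boundedness are correct.
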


The periodized kernel $\K_\rho$ from~\eqref{eq:DefineKernel} 
is not smooth at the boundaries 
$\{ x_2 = \pm \rho \}$. To prove G\r{a}rding inequalities for the
periodized integral equation, we additionally need to smoothen 
this kernel at $\{ x_2 = \pm \rho \}$ and, to this end, introduce a 
suitable cut-off function. For $\RR > 2\rho$ we choose a 
$2\RR$-periodic function $\chi \in C^3(\R)$ that satisfies
$0\leq \chi \leq 1$ and $\chi(x_2) = 1$ for
$|x_2| \leq 2\rho$. Moreover, we assume that $\chi(\RR)$ 
vanishes up to order three, $\chi^{(j)}(\RR)=0$
for $j=1,2,3$ (compare Figure~\ref{fig:2}).

\begin{figure}[b!!!th]
  \psfrag{x1}{\small $x_1$}
  \psfrag{x2}{\small $x_2$}
  \psfrag{rho}{\small $x_2=\rho$}
  \psfrag{-rho}{\small $x_2=-\rho$}
  \psfrag{R}{\small $x_2=R$}
  \psfrag{-R}{\small $x_2=-R$}
  \centering
\subfloat[]{\hspace*{-0.7cm}\includegraphics[width = 0.50\linewidth]{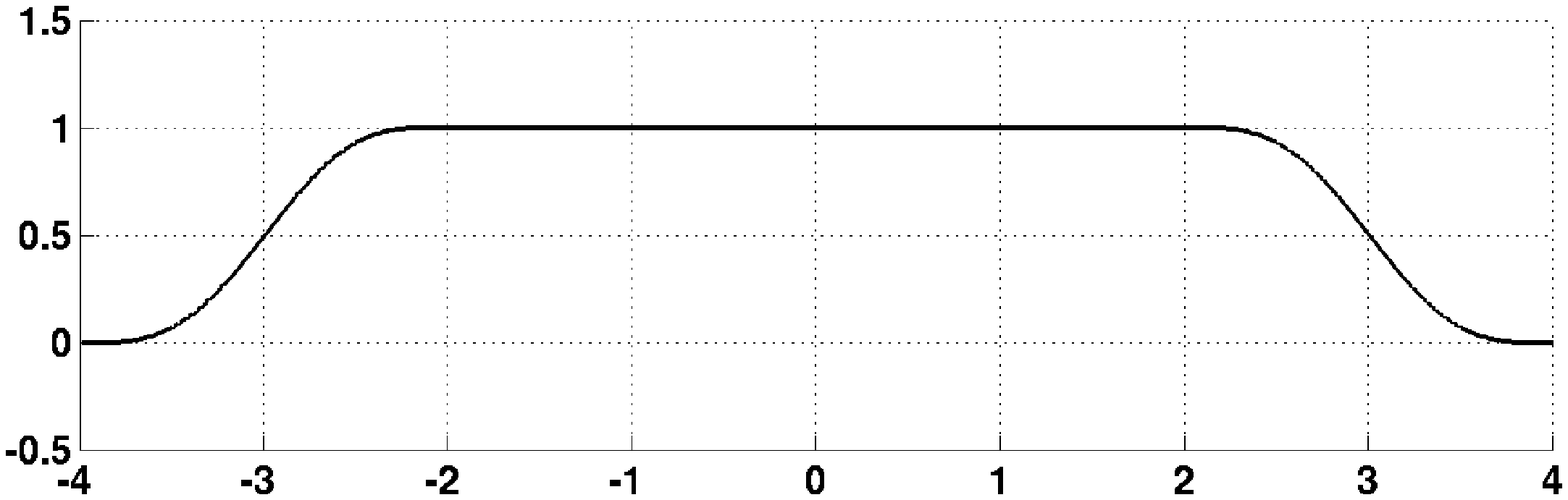}}
\subfloat[]{\hspace*{-0.3cm}\includegraphics[width = 0.55\linewidth]{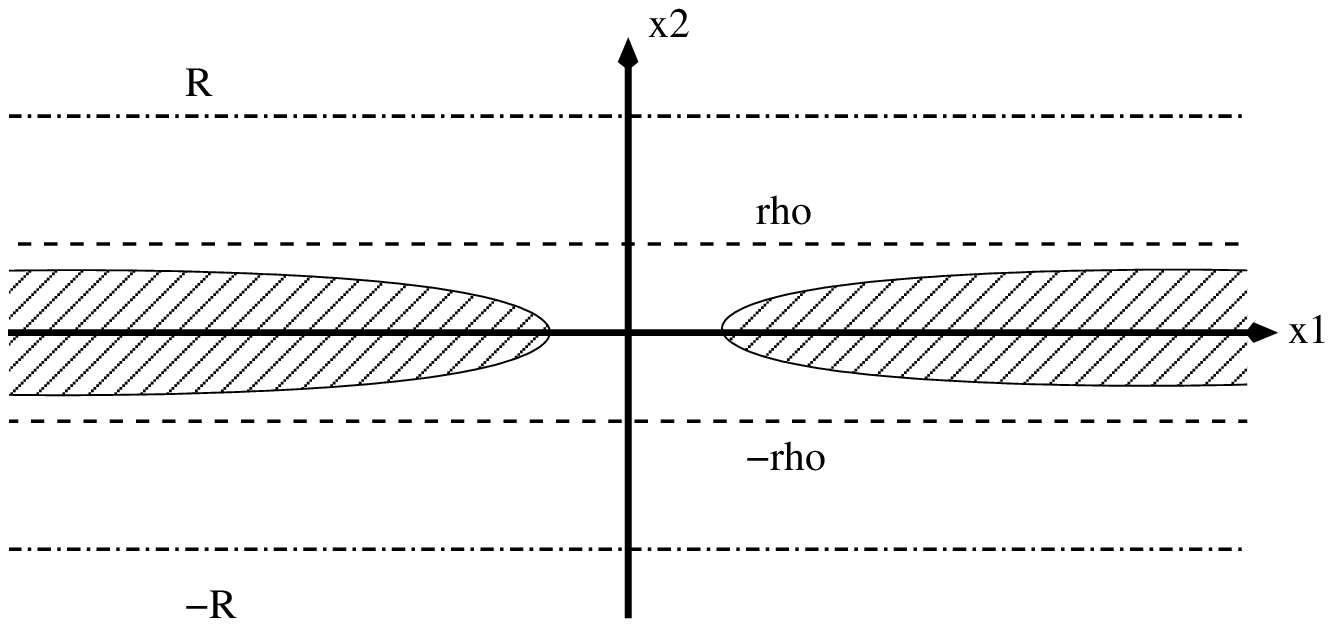}}
% 
%   \begin{tabular}[h!!!!tb]{c c}
%     \hspace*{-1cm} \includegraphics[width=0.5cm]{kernelChi}
%     & \hspace*{1.3cm} \includegraphics[width=6cm]{rhoR} \\[0cm] 
%     (a) & (b)  
%    \end{tabular}
   \caption{(a) The $2\RR$-periodic function $\chi$ equals to one for $|x_2| \leq 2\rho $,
    and it vanishes at $\pm \RR$ up to order three. In this sketch, $\rho=1$
    and $R = 4$.
     (b) The support of the contrast (shaded) is included in $\Omega_\rho = \{ |x_2| < \rho \}$, 
     and $R>2\rho$.}
  \label{fig:2}
\end{figure}

Let us define a smoothed kernel $\K_{\smooth}$ by
\begin{equation}
  \label{eq:kSmooth}
  \K_{\smooth}(x) = \chi(x_2)\K_{\RR}(x)
  \qquad \text{for } x\in \R^2,
  \quad x \neq \left( \begin{matrix} 2\pi j_1 \\ 2\RR j_2 \end{matrix} \right), \, j \in\Z^2,
\end{equation}
where $\K_{\RR}$ is the kernel from~\eqref{eq:DefineKernel}.
Note that $\K_{\smooth}$ is $\alpha$-quasi-periodic in $x_1$,
$2\RR$-periodic in $x_2$, and a smooth function on its
domain of definition (that is, away from the singularity).

\begin{lemma}
  The integral operator $L_{\per}: \, L^2(\Omega_{\RR}, \C^2) \to H^1_{\per}(\Omega_{\RR})$
  defined by 
  \[
    L_{\per}f := \div \int_{\Omega_{\RR}} \K_{\smooth}(\cdot-y) f(y) \d{y} 
  \] 
  is bounded.
\end{lemma}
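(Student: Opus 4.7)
The plan is to factor $L_{\per} = \div \circ T$, where $T$ denotes the convolution operator $Tf := \int_{\Omega_\RR} \K_{\smooth}(\cdot - y) f(y)\d{y}$ acting componentwise on $\C^2$-valued $f$. Since the divergence is a bounded operator from $H^2_{\per}(\Omega_\RR,\C^2)$ into $H^1_{\per}(\Omega_\RR)$, it suffices to prove that $T$ maps $L^2(\Omega_\RR,\C^2)$ boundedly into $H^2_{\per}(\Omega_\RR,\C^2)$. Because $T$ is a convolution on the periodic cell $\Omega_\RR$, Plancherel's identity reduces this claim, in turn, to the pointwise bound $|\hat{\K}_{\smooth}(j)| \le C(1+|j|^2)^{-1}$ for all $j \in \Z^2$.

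Next I would exploit the product structure $\K_{\smooth}(x) = \chi(x_2)\K_\RR(x)$ from~\eqref{eq:kSmooth}. Expanding $\chi$ into its one-dimensional Fourier series in the basis $\{e^{\i n\pi x_2/\RR}\}_{n\in\Z}$ and using that multiplication by such an exponential corresponds to a shift of the second Fourier index, one arrives at an identity of the form
\[
\hat{\K}_{\smooth}(j_1,j_2) \;=\; c\sum_{n\in\Z} \hat\chi_n\,\hat{\K}_\RR(j_1,j_2-n),
\]
for some universal constant $c$. Two ingredients then enter the estimate: Lemma~\ref{th:boundKh}, applied with $\rho$ replaced by $\RR$, gives $|\hat{\K}_\RR(j)| \le C(1+|j|^2)^{-1}$, because $K_\RR$ is bounded from $L^2(\Omega_\RR)$ into $H^2_{\per}(\Omega_\RR)$; and the assumptions that $\chi\in C^3(\R)$ is $2\RR$-periodic with $\chi^{(\ell)}(\RR)=0$ for $\ell=1,2,3$ yield $|\hat\chi_n|\le C(1+|n|)^{-3}$, so in particular $(\hat\chi_n)$ is absolutely summable.

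The final step is a standard convolution splitting. For $|n|\le|j_2|/2$ one has $|j_2-n|\ge|j_2|/2$, hence $1+j_1^2+(j_2-n)^2 \ge (1+|j|^2)/4$, and the corresponding partial sum is controlled by $C(1+|j|^2)^{-1}\sum_n|\hat\chi_n|$. For $|n|>|j_2|/2$ one combines $\sum_{|n|>|j_2|/2}(1+|n|)^{-3} = O((1+|j_2|)^{-2})$ with the uniform bound $|\hat{\K}_\RR(j_1,j_2-n)| \le C(1+j_1^2)^{-1}$; the resulting contribution of order $(1+j_1^2)^{-1}(1+|j_2|)^{-2}$ is again dominated by $C(1+|j|^2)^{-1}$. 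The degenerate case $j_2=0$ is handled directly by the uniform bound. The main obstacle I anticipate is of a bookkeeping nature: matching the decay rates in both $j_1$ and $j_2$ simultaneously so that no power of $(1+|j|^2)$ is lost. The prescribed $C^3$ regularity of $\chi$ is precisely what makes this balance work, since a less regular cut-off would yield a slower decay of $(\hat\chi_n)$, hence only $H^{2-\epsilon}$ regularity of $Tf$, which would not compensate for the derivative lost on applying $\div$.
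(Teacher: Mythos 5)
Your proof is correct, but it follows a genuinely different route from the paper's. The paper decomposes $\K_{\smooth}=\K_{\RR}+(\chi-1)\K_{\RR}$, handles the first summand by invoking Lemma~\ref{th:boundKh} directly (so that $f\mapsto\div\int\K_\RR(\cdot-y)f(y)\d{y}$ is bounded $L^2\to H^1_\alpha$), observes that the second kernel $(\chi-1)\K_{\RR}$ vanishes on the diagonal set $\{|x_2-y_2|\le 2\rho\}$ and hence defines a compact --- in particular bounded --- operator into $H^1_\alpha$, and finally appeals to periodicity of $\K_{\smooth}$ in $x_2$ to upgrade the target to $H^1_{\per}$. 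You instead work entirely on the Fourier side, establishing the pointwise bound $|\hat{\K}_{\smooth}(j)|\le C(1+|j|^2)^{-1}$ via the convolution identity $\hat{\K}_{\smooth}(j_1,j_2)=c\sum_n\hat\chi_n\,\hat{\K}_{\RR}(j_1,j_2-n)$, the bound $|\hat{\K}_{\RR}(j)|\le C(1+|j|^2)^{-1}$ from Lemma~\ref{th:boundKh}, and the decay $|\hat\chi_n|=O(|n|^{-3})$ from $\chi\in C^3$; your two-range splitting at $|n|=|j_2|/2$ is then standard and the bookkeeping you were worried about does balance out. What your approach buys: it is fully quantitative, it produces exactly the Fourier-coefficient formula for $\hat{\K}_{\smooth}$ that the paper needs anyway in Section~\ref{se:discretization} (see~\eqref{eq:evaluateL} and the discussion preceding it), it makes the role of the $C^3$ regularity of $\chi$ transparent, and it sidesteps the slightly delicate point in the paper's argument that $(\chi-1)\K_{\RR}$, while free of the diagonal singularity, still inherits the Lipschitz kink of the periodized kernel $\K_{\RR}$ along $\{x_2-y_2=\pm\RR\}$, so calling it ``smooth'' requires some care. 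What the paper's approach buys: it is shorter and operator-theoretic, and the compactness of the $(\chi-1)\K_{\RR}$ part is stronger information that resurfaces in the proof of Theorem~\ref{th:PeriodicGarding}. One small remark: periodicity of $\chi\in C^3(\R)$ already gives the decay $|\hat\chi_n|=O(|n|^{-3})$ you need; the explicit vanishing conditions $\chi^{(j)}(\RR)=0$, $j=1,2,3$, are not actually required for this lemma.
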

\begin{proof}
  We split the integral operator in two parts,
  \begin{align*}
    L_{\per}f & = \div \int _{\Omega_{\RR}} \K_{\smooth}(\cdot-y) f(y) \d{y}
    = \div \int _{\Omega_{\RR}} \chi(\cdot-y_2)\K_{\RR}(\cdot-y) f(y) \d{y}\\
    & = \div \int _{\Omega_{\RR}}\K_{\RR}(\cdot-y) f(y) \d{y}
    + \div \int _{\Omega_{\RR}} [\chi(\cdot-y_2)-1] \K_{\RR}(\cdot-y) f(y) \d{y}.
  \end{align*}
  By Theorem~\ref{th:boundKh}, the integral operator with
  the kernel $\K_{\RR}$ is bounded from
  $L^2(\Omega_{\RR}, \C^2)$ into $H^1_\alpha(\Omega_{\RR})$.
  Further, the definition of $\chi$ shows that
  $\chi(x_2 - y_2) = 1$ for $|x_2-y_2|\leq2\rho$.
  The kernel $(\chi-1)\K_{\RR}$ is hence smooth in
  $\Omega_{\RR}$, and the corresponding integral
  operator is compact from $L^2(\Omega_{\RR}, \C^2)$ into
  $H^1_\alpha(\Omega_{\RR})$. Hence, $L_{\per}$ is bounded
  from $L^2(\Omega_{\RR}, \C^2)$ into $H^1_\alpha(\Omega_{\RR})$.
  Periodicity of the kernel $\K_{\smooth}$ in the second component 
  of its argument finally implies that $L_{\per} f$ belongs to
  $H^1_{\per}(\Omega_{\RR}) \subset H^1_\alpha(\Omega_{\RR})$.
\end{proof}

Let us now consider the periodized integral equation
\begin{equation}
  \label{eq:PeriodicOperatorEquation}
  u - L_{\per}(q \nabla u) = L_{\per}(f) \qquad
  \text{in } H^1_{\per}(\Omega_{\RR}),
\end{equation}
where, for simplicity, we call the unknown function $u$.

\begin{theorem}
\label{th:equivalence}
(a) If $f \in L^2(D, \C^2)$, then $L_{\per}(f)$ equals $L(f)$ in $\Omega_\rho$.

(b) Equation~\eqref{eq:lippmannD} is uniquely solvable
in $H^1_\alpha(D)$ for any right-hand side $f \in L^2(D, \C^2)$
if and only if~\eqref{eq:PeriodicOperatorEquation} is uniquely
solvable in $H^1_{\per}(\Omega_{\RR})$ for any right-hand side
$f \in L^2(D, \C^2)$.

(c) If $q \in C^{2,1}(\ol{D})$ and if $f= q \nabla u^i$ for a 
smooth $\alpha$-quasi-periodic function $u^i$, then any solution 
to~\eqref{eq:PeriodicOperatorEquation} belongs to 
$H^{s}_{\per}(\Omega_R)$ for any $s<3/2$.
\end{theorem}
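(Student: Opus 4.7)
The plan is to address the three claims in sequence, with (a) a direct identification of the two kernels on the relevant range, (b) a construction of mutually inverse solution maps, and (c) a regularity argument combining (b) with classical transmission-problem theory. For (a), I would unfold the definition of $\K_{\smooth}$: for $x \in \Omega_\rho$ and $y \in D \subset \Omega_\rho$ one has $|x_2 - y_2| < 2\rho$, so $\chi(x_2 - y_2) = 1$ by the construction of $\chi$; also $|x_2 - y_2| < R$, so $\K_R(x - y) = G_\alpha(x - y)$ since the $2R$-periodic extension only modifies $\K_R$ outside $(-R, R)$. Hence $\K_{\smooth}(x - y) = G_\alpha(x - y)$ in this configuration, and since $f$ vanishes outside $D$ the integrands and integration domains defining $L_{\per}(f)$ and $L(f)$ coincide on $\Omega_\rho$; applying the divergence yields (a).

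For part (b), I would exhibit a linear bijection between solution sets. Given $u^s \in H^1_\alpha(D)$ solving~\eqref{eq:lippmannD}, I extend $q \nabla u^s$ by zero outside $D$ and define $u := L_{\per}(q \nabla u^s + f) \in H^1_{\per}(\Omega_{\RR})$. By part (a) one has $u|_D = L(q \nabla u^s + f) = u^s$, and since $q \equiv 0$ outside $D$ this gives $q \nabla u = q \nabla u^s$ in $L^2(\Omega_{\RR}, \C^2)$, so $u - L_{\per}(q \nabla u) = L_{\per}(f)$. Conversely, if $u \in H^1_{\per}(\Omega_{\RR})$ solves~\eqref{eq:PeriodicOperatorEquation}, then $u^s := u|_D \in H^1_\alpha(D)$ solves~\eqref{eq:lippmannD}, again by (a). These two linear maps are mutually inverse on the respective solution sets, so unique solvability for every right-hand side transfers in both directions.

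For part (c), I would exploit the equivalence from (b) together with known regularity of the underlying transmission-type problem. By construction $u = L_{\per}(q \nabla u^s + f)$, and by (a) the restriction $u|_{\Omega_\rho}$ coincides with $u^s|_{\Omega_\rho}$; the scattering solution $u^s$ satisfies $\div((1+q) \nabla u^s) + k^2 u^s = -\div(q \nabla u^i)$ with $C^{2,1}$ coefficient and smooth right-hand side on both sides of the $C^{2,1}$ interface $\partial D$, and standard elliptic regularity for such transmission problems yields $u^s \in H^2$ on each side of $\partial D$, whence $u^s \in H^{3/2 - \epsilon}_{\loc}$ globally, the half-derivative loss being caused by the jump of $\partial_\nu u^s$ across $\partial D$. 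On $\Omega_{\RR} \setminus \Omega_\rho$, where $u$ need not coincide with $u^s$, the representation features an $L^2$ density supported in $D$ against the kernel $\K_{\smooth}$, which is smooth away from its singularity at $y = x$; since $x$ stays away from $\ol D$ here, one may differentiate freely under the integral and $u$ is smooth on $\Omega_{\RR} \setminus \ol D$. Gluing these piecewise regularity statements across $\partial D$ yields $u \in H^s_{\per}(\Omega_{\RR})$ for all $s < 3/2$. The main obstacle is precisely this last step: applying sharp transmission-type elliptic regularity on the $C^{2,1}$ interface $\partial D$, splitting $u$ into piecewise $H^2$ components with continuous trace but jumping conormal derivative, and carrying out the piecewise-to-global gluing that delivers exactly $H^{3/2 - \epsilon}$ and no more. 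In contrast, parts (a) and (b) follow essentially directly from the definitions of $\K_{\smooth}$ and $\chi$ together with part (a).
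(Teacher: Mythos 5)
Your proposal matches the paper's proof in all three parts: (a) identifying $\K_{\smooth}$ with $G_\alpha$ on the relevant range via the construction of $\chi$ and the periodization, (b) transferring unique solvability via the mutually inverse maps $u^s \mapsto L_{\per}(q\nabla u^s + f)$ and $u \mapsto u|_D$, and (c) combining $H^2$ transmission regularity on each side of the $C^{2,1}$ interface $\partial D$ with smoothness of $u$ away from $\ol{D}$ coming from the smooth kernel. The only cosmetic difference is that the paper explicitly routes part (c) through Lemma~\ref{th:H2smoothness} to identify $u|_{\Omega_\rho}$ as a weak solution of $\div((1+q)\nabla u)+k^2u=-\div(q\nabla u^i)$ before citing Grisvard, whereas you invoke "classical transmission-problem theory" directly, but the substance is identical.
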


\begin{proof}
(a) For all $x$ and $y \in \Omega_{\RR}$
such that $|x_2-y_2|\leq 2\rho$ it holds that
$\K_{\smooth}(x-y) = \chi(x_2-y_2)\K_{\RR}(x-y) = G_{\alpha}(x-y)$.
In particular, for $x \in \Omega_\rho$ and $y \in D \subset \Omega_\rho$ 
it holds that $|x_2-y_2|\leq 2\rho$. Consequently, 
\begin{align*}
  (L_{\per}(f))(x)
   &= \div\int_{\Omega_{\RR}} \K_{\smooth}(x-y) f(y) \d{y}\\
  & = \div\int_{D} G_{\alpha}(x-y) f(y) \d{y}
  = (L(f))(x), \qquad x \in \Omega_\rho.
\end{align*} 

(b) Assume that $u^s \in H^1_\alpha(D)$ solves~\eqref{eq:lippmannD}
for a right-hand side $f \in L^2(D, \C^2)$ and define 
$\tilde{u} \in H^1_{\per}(\Omega_{\RR})$ by
$\tilde{u} = L_{\per}(q \nabla u^s + f)$. Since 
$u^s$ solves~\eqref{eq:lippmannD}, and due to 
part (a), we find that $\tilde{u}|_D = u^s$. Hence
$L_{\per}(q\nabla \tilde{u})=L_{\per}(q\nabla u^s)$
in $H^1_{\per}(\Omega_{\RR})$, which yields that
\begin{equation}
\label{eq:equiProof2}
 \tilde{u} = L_{\per}(q \nabla \tilde{u} +f) \text{ in } H^1_{\per}(\Omega_{\RR}).
\end{equation}
Now, if $f \in L^2(D, \C^2)$ vanishes, then uniqueness
of a solution to~\eqref{eq:lippmannD} implies
that $u^s \in H^1_\alpha(D)$ vanishes, too. Obviously,
$\tilde{u}= L_{\per}(q \nabla u^s)$ vanishes,
and hence~\eqref{eq:equiProof2} is uniquely solvable.  
The converse follows directly from (a).

(c) Assume that $u \in H^1_{\per}(\Omega_R)$ 
solves~\eqref{eq:PeriodicOperatorEquation} for $f= q \nabla u^i$.
Part (a) implies that the restriction of $u$ to $\Omega_\rho$ solves 
$u - L(q \nabla u) = L (q \nabla u^i)$ in $H^1_\alpha(\Omega_\rho)$.
Hence, Lemma~\ref{th:H2smoothness} implies that $u$ is a weak 
$\alpha$-quasi-periodic solution to $\div ((1+q) \nabla u) + k^2 u = - \div (q \nabla u^i)$
in $\Omega_\rho$. Transmission regularity results imply that $u$ belongs to 
$H^2_\alpha(D) \cap H^2_\alpha(\Omega_\rho \setminus \ol{D})$, and it is well-known 
that this implies that $u \in H^s_\alpha(\Omega_\rho)$ for $s<3/2$ 
(see, e.g.,~\cite[Section 1.2]{Grisv1992}). The function $u$ is even smooth 
in $\Omega_{\RR} \setminus \Omega_{\rho-\epsilon}$:
Recall that $\rho$ was chosen such that $\ol{D} \subset \Omega_{\rho}$.
Hence, there is $\epsilon>0$ such that $D \subset \Omega_{\rho-2\epsilon}$, and
\[
  u(x) = L_{\per}(q \nabla (u+u^i)) (x)
  = \div \int_{D} \K_{\smooth}(x-y) q(y) \nabla (u(y)+u^i(y)) \d{y}, 
  \quad x \in \Omega_{\RR} \setminus \Omega_{\rho-\epsilon}  
\]
shows that the restriction of $u$ to $\Omega_{\RR} \setminus \Omega_{\rho-\epsilon}$
is a smooth $\alpha$-quasi-periodic function, since the kernel of the above 
integral operator is smooth.
\end{proof}

Next we prove that the operator $I-L_{\per}(q \nabla \cdot)$
from~\eqref{eq:PeriodicOperatorEquation} satisfies a
G\r{a}rding inequality in $H^1_{\per}(\Omega_{\RR})$.
First, we announce a simple lemma that is useful in the next proof.

\begin{lemma}
\label{th:Compact1}
Suppose that $X$ and $Y$ are Hilbert spaces. Let $A$,
$B$ be bounded linear operators from $X$ into $Y$ and
consider the sesquilinear form $(u,v) \mapsto \langle Au,Bv\rangle_Y$ 
on $X \times X$.
If either $A$ or $B$ is compact, then
the linear operator $Q:\, X\rightarrow X$ defined by 
$\langle Qu,v\rangle_X = \langle Au,Bv\rangle_Y$ for 
$u,v \in X$ is compact, too.
\end{lemma}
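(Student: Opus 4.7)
The plan is to identify $Q$ as a composition $B^{\ast} A$, where $B^{\ast}\colon Y \to X$ denotes the Hilbert-space adjoint of $B$, and then invoke the standard facts that the composition of a compact operator with a bounded one is compact and that the adjoint of a compact operator is compact (Schauder's theorem).

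First, I would verify that $Q$ is well defined and bounded. For each fixed $u \in X$, the map $v \mapsto \langle Au, Bv\rangle_Y$ is antilinear in $v$ and bounded, since
\[
  |\langle Au, Bv\rangle_Y| \leq \|A\|\,\|B\|\,\|u\|_X\,\|v\|_X.
\]
By the Riesz representation theorem, there is a unique $Qu \in X$ with $\langle Qu,v\rangle_X = \langle Au, Bv\rangle_Y$, and $u \mapsto Qu$ is easily checked to be linear with $\|Q\|\leq \|A\|\,\|B\|$. Next, using the defining property of the Hilbert adjoint $B^{\ast}$, one has $\langle Au, Bv\rangle_Y = \langle B^{\ast}Au, v\rangle_X$ for all $u,v \in X$, so by uniqueness $Q = B^{\ast} A$.

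With this identification the conclusion is immediate. If $A$ is compact, then $Q = B^{\ast} A$ is a bounded operator composed after a compact one, and hence compact. If $B$ is compact, then by Schauder's theorem $B^{\ast}$ is compact, so again $Q = B^{\ast} A$ is compact as the composition of a bounded operator followed by a compact one. In either case $Q\colon X \to X$ is compact, as claimed.

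There is essentially no obstacle here; the only point worth mild care is that the Riesz identification is carried out with respect to the correct inner product (that of $X$ on the left, $Y$ on the right), so that the adjoint appearing really is $B^{\ast}\colon Y \to X$ and the composition makes sense.
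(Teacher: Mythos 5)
Your proof is correct, and it is the natural argument: identify $Q = B^{\ast}A$, then use that compositions with a compact factor are compact together with Schauder's theorem when the compact factor is $B$. The paper states this as ``a simple lemma'' and omits the proof entirely, so there is nothing to compare against; your write-up supplies exactly the standard justification the authors had in mind.
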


% For negative material parameters, the result relies on the
% operator $R: \, H^1_{\per}(\Omega_{\RR}) \to H^1_{\per}(\Omega_{\RR})$
% who flips the sign of a function $v \in H^1_{\per}(\Omega_{\RR})$ 
% inside $D$. To define $R$, we re-use the periodic extension operator 
% $E: \, H^1_\alpha(D) \to H^1_\alpha(\Omega)$ from Appendix~\ref{se:extension},
% \begin{equation}
%   \label{eq:defineR}
%   R: \, H^1_{\per}(\Omega_{\RR}) \to H^1_{\per}(\Omega_{\RR}),
%   \qquad
%   R (v) =
%   \begin{cases}
%     v -2E(v) & \text{in } \Omega_{\RR} \setminus \ol{D}, \\
%     -v & \text{in } D,
%   \end{cases}
% \end{equation}
% Note that $Rv$ indeed belongs to $H^1_{\per}(\Omega_{\RR})$:
% the jump $[R(v)]_{\partial D}$ vanishes, and $E(v)$ vanishes
% for $|x_2|> 2\rho$ where $2\rho < \RR$. 

\begin{theorem}
  \label{th:PeriodicGarding}
  Assume that $\sqrt{q} \in C^{2,1}(\ol{D})$,
  that $q \geq q_0 > 0$, 
  and that $D$ is of class $C^{2,1}$. Then there exists $C>0$ and a compact operator
  $K$ on $H^1_{\per}(\Omega_{\RR})$ such that
  \begin{equation}
    \label{eq:gardingPeriodic1}
    \Re\langle v - L_{\per}(q\nabla v),v\rangle_{H^1_{\per}(\Omega_{\RR})}
    \geq \|v\|^2_{H^1_{\per}(\Omega_{\RR})}
    - \Re\langle K v, \, v\rangle_{H^1_{\per}(\Omega_{\RR})}, 
    \quad v \in H^1_{\per}(\Omega_{\RR}).
  \end{equation}
%   (b) If $q<0$ and if
%   \begin{equation}
%      \label{eq:condition2}
%      \inf_D |q| > \| E \|^4_{H^1_\alpha(D) \to H^1_{\alpha}(\Omega_{\RR})},
%   \end{equation}
%   then there exists $C>0$ and a compact operator $K_-$ on
%   $H^1_{\per}(\Omega_{\RR})$ such that
%   \begin{equation}
%     \label{eq:gardingPeriodic2}
%     \Re\langle v - L_{\per}(q\nabla v), R(v) \rangle_{H^1_{\per}(\Omega_{\RR})}
%     \geq C \|v\|^2_{H^1_{\per}(\Omega_{\RR})}
%     - \Re\langle K_- v, \, v\rangle_{H^1_{\per}(\Omega_{\RR})}, \quad 
%     v \in H^1_{\per}(\Omega_{\RR}).
%   \end{equation}
\end{theorem}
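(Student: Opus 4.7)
\emph{Strategy.} Using the equivalent integral form of the $H^1_{\per}(\Omega_\RR)$ inner product, I decompose
\[
\Re\langle v-L_{\per}(q\nabla v),v\rangle_{H^1_{\per}(\Omega_\RR)}
= \Re\int_D[\cdots]\,dx + \Re\int_{\Omega_\RR\setminus D}[\cdots]\,dx.
\]
The $D$-part will be reduced directly to Theorem~\ref{th:FreeGarding}, while the contribution from $\Omega_\RR\setminus D$ will be shown to define a compact sesquilinear form on $H^1_{\per}(\Omega_\RR)$.

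\emph{Reduction of the $D$-part.} For $v\in H^1_{\per}(\Omega_\RR)$ set $\tilde v := v|_D\in H^1_\alpha(D)$. Since $q\nabla v$ is supported in $D\subset\Omega_\rho$, Theorem~\ref{th:equivalence}(a) identifies $L_{\per}(q\nabla v)|_D = L(q\nabla\tilde v)|_D$, so the $D$-part of the form equals $\langle\tilde v - L(q\nabla\tilde v),\tilde v\rangle_{H^1_\alpha(D)}$, bounded below by $\|\tilde v\|^2_{H^1_\alpha(D)} - \Re\langle K_0\tilde v,\tilde v\rangle_{H^1_\alpha(D)}$ for a compact $K_0$ on $H^1_\alpha(D)$ by Theorem~\ref{th:FreeGarding}. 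Lemma~\ref{th:Compact1}, applied with $A=K_0\circ R$ (compact) and $B=R$ (bounded), where $R:H^1_{\per}(\Omega_\RR)\to H^1_\alpha(D)$ denotes restriction, lifts $K_0$ to a compact operator $\tilde K_1$ on $H^1_{\per}(\Omega_\RR)$ satisfying $\langle\tilde K_1 v,v\rangle_{H^1_{\per}(\Omega_\RR)} = \langle K_0\tilde v,\tilde v\rangle_{H^1_\alpha(D)}$.

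\emph{Compactness on $\Omega_\RR\setminus D$.} It remains to show that
\[
\mathcal{B}(v,w) := \int_{\Omega_\RR\setminus D}\bigl[\nabla L_{\per}(q\nabla v)\cdot\nabla\bar w + L_{\per}(q\nabla v)\bar w\bigr]\,dx
\]
defines a compact form on $H^1_{\per}(\Omega_\RR)$. The key smoothing estimate is that the volume potential $V_{\per}$ with kernel $\K_{\smooth}$ is bounded $L^2(\Omega_\RR,\C^2)\to H^2_{\per}(\Omega_\RR,\C^2)$: this follows from the splitting $\K_{\smooth} = \K_\RR + (\chi-1)\K_\RR$, Lemma~\ref{th:boundKh} (with $\rho$ replaced by $\RR$), and the $C^\infty$-kernel of $(\chi-1)\K_\RR$ on $\Omega_\RR$. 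Combined with $H^2\hookrightarrow H^1$ compactly, $v\mapsto V_{\per}(q\nabla v)$ is compact $H^1_{\per}(\Omega_\RR)\to H^1_{\per}(\Omega_\RR,\C^2)$, and therefore $v\mapsto L_{\per}(q\nabla v)=\div V_{\per}(q\nabla v)$ is compact into $L^2(\Omega_\RR)$. Writing $u:=L_{\per}(q\nabla v)$, one has $\Delta u + k^2 u = 0$ on $\Omega_\rho\setminus\overline D$ by Lemma~\ref{th:H2smoothness}, and $u$ is $C^\infty$ on $\Omega_\RR\setminus\Omega_\rho$ (since $\K_{\smooth}(x-y)$ is smooth there for $y\in D$, as $D$ lies compactly in $\Omega_\rho$). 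Integrating by parts on $\Omega_\RR\setminus D$, contributions on $\{x_1 = \pm\pi\}$ cancel by $\alpha$-quasi-periodicity and those on $\{x_2=\pm\RR\}$ by $2\RR$-periodicity; boundary terms on $\{x_2=\pm\rho\}$ together with the volume integral over $\Omega_\RR\setminus\Omega_\rho$ are compact by the above smoothness, while the leftover volume term $(1+k^2)\int_{\Omega_\rho\setminus D}u\bar w\,dx$ is compact through the compactness of $v\mapsto u\in L^2$ and Lemma~\ref{th:Compact1}.

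\emph{Main obstacle.} The principal difficulty is the remaining boundary integral $\int_{\partial D}(\partial_\nu^{\mathrm{ext}} u)\bar w\,d\sigma$, which is a priori only a bounded pairing in $H^{-1/2}\times H^{1/2}(\partial D)$. My plan is to exploit the distributional transmission identity $\partial_\nu^{\mathrm{ext}} u - \partial_\nu^{\mathrm{int}} u = (q\nabla v|_{\partial D^-})\cdot\nu$ (which follows from $\Delta u + k^2 u = -\div(q\nabla v)$ in $\Omega_\rho$ and continuity of $u\in H^1$) and to apply Green's first identity inside $D$ to the $\partial_\nu^{\mathrm{int}} u$ piece: the problematic surface terms cancel and the remainder consists of volume integrals over $D$ that are compact in $v$ by compactness of $v\mapsto u\in L^2$ combined with Lemma~\ref{th:Compact1}. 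Summing all compact contributions produces a compact operator $\tilde K_2$, and~\eqref{eq:gardingPeriodic1} then holds with $K = \tilde K_1 + \tilde K_2$.
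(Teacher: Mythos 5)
Your decomposition of the inner product into $D$ and $\Omega_\RR\setminus D$ pieces, and the reduction of the $D$-piece to Theorem~\ref{th:FreeGarding}, are both fine and parallel what the paper does (though the paper splits further into $D$, $\Omega_\rho\setminus\ol D$, and $\Omega_\RR\setminus\ol\Omega_\rho$). Your observation that $v\mapsto L_{\per}(q\nabla v)$ is compact from $H^1_{\per}(\Omega_\RR)$ into $L^2(\Omega_\RR)$ is also correct. The gap is in the final step. After substituting the transmission identity $\partial_\nu^{\mathrm{ext}}u=\partial_\nu^{\mathrm{int}}u+(q\nabla v)\cdot\nu$ and applying Green's identity in $D$ (with $\Delta u=-k^2u-\div(q\nabla v)$), one gets
\[
\int_{\partial D}\partial_\nu^{\mathrm{ext}}u\,\bar w\,\mathrm{d}s
=-k^2\int_Du\,\bar w\,\mathrm{d}x+\int_Dq\nabla v\cdot\nabla\bar w\,\mathrm{d}x+\int_D\nabla u\cdot\nabla\bar w\,\mathrm{d}x .
\]
Only the first term is compact via $v\mapsto u\in L^2$. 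The second is not compact at all, though on the diagonal ($w=v$) it equals $\int_Dq|\nabla v|^2\geq0$, so at least it helps. The third term, $\int_D\nabla L(q\nabla v)\cdot\nabla\bar w\,\mathrm{d}x$, is the real problem: $v\mapsto\nabla L(q\nabla v)$ is only bounded, not compact, from $H^1$ into $L^2(D,\C^2)$ (indeed, if $L(q\nabla\cdot)$ were smoothing to this extent the operator $I-L(q\nabla\cdot)$ would be Fredholm by compactness, and no G\aa{}rding argument would be needed), and it carries no useful sign. So the claim that ``the remainder consists of volume integrals over $D$ that are compact in $v$'' is false, and the proof does not close.

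This is exactly the difficulty the paper's proof is built to circumvent, and it does so by a different integration by parts: not in the energy form but inside the defining integral of $L(q\nabla v)(x)$ for $x\in\Omega_\rho\setminus\ol D$. That rewrites $L(q\nabla v)$ as $-k^2V(qv)-L(v\nabla q)-\mathrm{DL}(\gamma_{\mathrm{int}}(qv))$, isolating a double-layer potential whose density depends on the \emph{trace} of $v$ (not $\nabla v$). The troublesome term is then $\langle\nabla\mathrm{DL}(\gamma_{\mathrm{int}}(qv)),\nabla v\rangle_{L^2(\Omega_\rho\setminus\ol D)}$, which is handled via the jump relation $\gamma_{\mathrm{int}}(\mathrm{DL}\psi)=-\psi/2+T\psi$; the $C^{2,1}$ regularity of $\partial D$ makes $T$ compact on $H^{1/2}_\alpha(\partial D)$, and the remaining non-compact piece becomes the manifestly positive quantity $2\int_D|\nabla w|^2/q$ with $w=\mathrm{DL}(\gamma_{\mathrm{int}}(qv))$. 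In short, your route produces the wrong ``bad'' term; the paper's route engineers the manipulations so that the only non-compact leftover has a definite sign.
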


\begin{remark}
  The idea of the proof is to split the integrals defining the
  inner product on the left of~\eqref{eq:gardingPeriodic1}
  into the three integrals on $D$, $\Omega_{\rho} \setminus \ol{D}$,
  and on $\Omega_{\RR} \setminus \ol{\Omega_\rho}$. For the
  term on $D$ one exploits the G\r{a}rding inequalities from
  Theorem~\ref{th:FreeGarding}.
  The terms on $\Omega_{\rho} \setminus \ol{D}$
  and on $\Omega_{\RR} \setminus \ol{\Omega_{\rho}}$ can be
  shown to be compact or positive perturbations.
\end{remark}

\begin{proof}
Let $v \in H^1_{\per}(\Omega_{\RR})$.
First, we split up the integrals arising from the inner product
on the left of~\eqref{eq:gardingPeriodic1} into
integrals on $D$, on $\Omega_{\rho} \setminus \ol{D}$, 
and on $\Omega_{\RR} \setminus \ol{\Omega_\rho}$.
Second, we use the G\r{a}rding inequality from
Theorem~\ref{th:FreeGarding} to find that 
\begin{multline}
  \label{eq:gardingPreparation}
  \Re\langle v -  L_{\per}(q \nabla v),v\rangle_{H^1_{\per}(\Omega_{\RR})}
  % & = \Re\langle v -  L_{\per}(q \nabla v),v\rangle_{H^1_{\alpha}(D)}
  % + \Re\langle v -  L_{\per}(q \nabla v),v\rangle_{H^1_{\alpha}(\Omega_{\RR} \setminus \ol{D})}\\
  \geq \| v \|^2_{H^1_\alpha(D)} + \langle K v, \, v \rangle_{H^1_{\alpha}(D)}
  + \| v \|^2_{H^1_{\alpha}(\Omega_{\RR} \setminus \ol{D})} \\
  - \Re \big[ \langle L_{\per}(q \nabla v),v\rangle_{H^1_{\alpha}(\Omega_{\RR} \setminus \ol{\Omega_\rho})}
  + \langle L_{\per}(q \nabla v),v\rangle_{H^1_{\alpha}(\Omega_\rho \setminus \ol{D})} \big]
\end{multline}
with a compact operator $K$ on $H^1_\alpha(D)$.
Further, the evaluation of $L_{\per}(q\nabla \cdot)$ on
$\Omega_{\RR}\setminus \ol{\Omega_{\rho}}$ defines a
compact integral operator mapping $H^1_\alpha(D)$ to
$H^1_\alpha(\Omega_{\RR}\setminus \ol{\Omega_{\rho}})$,
because the (periodic) kernel of this integral operator is smooth.
(This argument requires the smooth kernel $\K_{\smooth}$ introduced
in the beginning of this section.) Lemma~\ref{th:Compact1} then 
allows to reformulate the corresponding term in~\eqref{eq:gardingPreparation}
in the way stated in the claim. Unfortunately, the last term
in~\eqref{eq:gardingPreparation} does not yield a compact 
sesquilinear form and needs a more detailed investigation.

For $x \in \Omega_{\rho}\setminus \ol{D}$ and $y \in D$ the
kernel $\K_{\smooth}(x-y)$ equals $G_{\alpha}(x-y)$, which
is a smooth function of $x \in \Omega_{\rho}\setminus \ol{D}$
and $y \in D$. Moreover, $\Delta G_{\alpha}(x-y) + k^2 G_{\alpha}(x-y)=0$
for $x \neq y$. Since $\nabla_x G_\alpha(x-y) = - \nabla_y G_\alpha(x-y)$, 
an integration by parts in $\Omega_{\rho}\setminus \ol{D}$ shows that 
\begin{align*}
  L(q\nabla v)(x) 
  = & \div \int_D G_{\alpha}(x-y)q(y)\nabla v(y)\d{y} \\
  = & - \int_D \nabla_y G_{\alpha}(x-y)\cdot\nabla (qv)(y)\d{y}
  + \int_D \nabla_y G_{\alpha}(x-y)\cdot \nabla q(y) v(y)\d{y} \\
  = & - k^2\int_D G_{\alpha}(x-y) q(y)v(y)\d{y} - L(v\nabla q)(x)  \\
  & \qquad - \int_{\partial D} \frac{\partial G_{\alpha}(x-y)}{\partial \nu(y)} \gamma_{\mathrm{int}}(q)(y) \gamma(v)(y)\d{s}
  \qquad \text{for } x \in \Omega_{\rho}\setminus \ol{D}, 
\end{align*}
where $\nu$ is the exterior normal vector to $D$.
The integral operator appearing in the last term of the
last equation is the double layer potential $\mathrm{DL}$,
\[
  \mathrm{DL}(\psi)
  = \int_{\partial D} \frac{\partial G_{\alpha}(\cdot-y)}{\partial \nu(y)} \psi(y) \d{s} \quad
  \text{in } \Omega \setminus \partial D.
\]
It is well-known that $\mathrm{DL}$ defines a bounded operator from
$H^{1/2}_\alpha(\partial D)$ into $H^1_\alpha(\Omega_{\RR}\setminus \ol{D})$ and
into $H^1_\alpha(D)$ (see, e.g.,~\cite{Arens2006}).
This implies that the jump of the double-layer potential
\[
  T \psi := [\mathrm{DL} \psi]_{\partial D}
  = \gamma_{\mathrm{ext}}(\mathrm{DL} \psi) - \gamma_{\mathrm{int}}(\mathrm{DL} \psi)
\]
from the outside of $D$ to the inside of $D$ is a bounded operator
on $H^{1/2}_\alpha(\partial D)$. It is well-known that in our case
$T$ is even a compact operator on $H^{1/2}_\alpha(\partial D)$, since $D$ is
of class $C^{2,1}$. Additionally, the equality
$\gamma_{\mathrm{int}} (\mathrm{DL} \psi) = - \psi / 2 + T \psi$
holds for $\psi \in H^{1/2}_\alpha(\partial D)$.

For $v \in H^1_{\per}(\Omega_{\RR})$,
\begin{equation}
  \label{eq:gardingPreparation2}
  -\langle \nabla L(q\nabla v), \, \nabla v \rangle_{L^2(\Omega_{\rho}\setminus \ol{D})}
  = \langle k^2\nabla V(q v) + \nabla L(v\nabla q) + \nabla \mathrm{DL}(\gamma_{\mathrm{int}}(q v)), \, \nabla v\rangle_{L^2(\Omega_{\rho}\setminus \ol{D})}.
\end{equation}
The mapping properties of $V$ shown in
Lemma~\ref{th:H2smoothness} and the smoothness of $q$
imply that $v \mapsto k^2\nabla V(q v) + \nabla L(v\nabla q)$
is compact from $H^1_{\per}(\Omega_{\RR})$ into $L^2(D)$.
To finish the proof we show that the last term
in~\eqref{eq:gardingPreparation2} can be written as a sum
of a positive and compact term. For simplicity, we define
$w = \mathrm{DL}(\gamma_{\mathrm{int}}(q v))$ and note that 
$-  v/2 = [\gamma_{\mathrm{int}}(w) - T(\gamma_{\mathrm{int}}(q v))]/\gamma_{\mathrm{int}}(q)$
on $\partial D$. Since it plays no role whether the normal derivative 
$\partial w / \partial \nu$ is taken from the inside or from the outside of $D$, 
we skip writing down the trace operators for the normal derivative. Then
\begin{multline}
  \label{eq:DoubleLayer}
  \langle \nabla \mathrm{DL}(q v),\nabla v\rangle_{L^2(\Omega_{\rho}\setminus \ol{D})}
  = \int_{\Omega_{\rho}\setminus \ol{D}} \nabla w\cdot\nabla\overline{v} \d{x}  \\
  = k^2\int_{\Omega_{\rho}\setminus \ol{D}} w \overline{v} \d{x}  
    - \int_{\partial D}\frac{\partial w}{\partial \nu} \overline{v} \d{s}
    + \int_{\Gamma_{\rho}} \frac{\partial w}{\partial x_2}\overline{v} \d{s}
    - \int_{\Gamma_{-\rho}} \frac{\partial w}{\partial x_2}\overline{v} \d{s}
\end{multline}
and the above jump relation shows that
\begin{align*}
  - \frac{1}{2} \int_{\partial D}\frac{\partial w}{\partial \nu}\overline{v} \d{s}
  & = \int_{\partial D}\frac{\partial w}{\partial \nu}\frac{\gamma_{\mathrm{int}}(\overline{w})}{\gamma_{\mathrm{int}}(q)} \d{s}
    - \int_{\partial D}\frac{\partial w}{\partial \nu}\frac{\overline{T(\gamma_{\mathrm{int}} (qv))}}{\gamma_{\mathrm{int}}(q)} \d{s} \\
  & = \int_{D} \nabla w \cdot \nabla \bigg( \overline{\frac{w}{q}} \bigg) \d{x}
    + \int_{D} \Delta w \overline{\frac{w}{q}}\d{x}
    - \int_{\partial D}\frac{\partial w}{\partial \nu} \frac{\overline{T(\gamma_{\mathrm{int}} (qv))}}{\gamma_{\mathrm{int}}(q)}  \d{s} \\
  & = \int_{D}\frac{|\nabla w|^2}{q}\d{x}
    +  \int_{D}\big(\nabla q^{-1} \cdot \nabla w - k^2\frac{w}{q}\big) \overline{w}\d{x} 
    - \int_{\partial D}\frac{\partial w}{\partial \nu} \frac{\overline{T(\gamma_{\mathrm{int}} (qv))}}{\gamma_{\mathrm{int}}(q)}  \d{s}.
\end{align*}
Combining the last computation with~\eqref{eq:DoubleLayer} shows that
\begin{align}
  \label{eq:positive}
  & \big\langle \nabla \mathrm{DL}(q v|_{\partial D}), \,
    \nabla v|_{\Omega_{\rho}\setminus \ol{D}} \big\rangle_{L^2(\Omega_{\rho}\setminus \ol{D})}
  =  2\int_{D} \frac{|\nabla w|^2}{q}\d{x} + k^2\int_{\Omega_{\rho}\setminus \ol{D}} w\overline{v}\d{x} \\
  & + 2 \int_{ D} \left( \nabla q^{-1} \cdot \nabla w - k^2\frac{w}{q} \right) \overline{w} \d{x} 
    - 2\int_{\partial D}\frac{\partial w}{\partial \nu} \frac{\overline{T(\gamma_{\mathrm{int}} (qv))}}{\gamma_{\mathrm{int}}(q)}  \d{s}
    + \left( \int_{\Gamma_{\rho}}-\int_{\Gamma_{-\rho}} \right) \frac{\partial w}{\partial x_2}\overline{v} \d{s}. \nonumber
\end{align}
Using Lemma~\ref{th:Compact1}, all the terms in the second line of 
the last equation can be rewritten as 
$\langle K_1 v, \, v \rangle_{H^1_\per(\Omega_{\RR})}$ where 
$K_1$ is a  compact operator on $H^1_\per(\Omega_{\RR})$. 
The mapping $v \mapsto \int_{D} |\nabla w|^2 / q \d{x}$ is obviously
positive if $q > 0$. In consequence,~\eqref{eq:gardingPreparation}
and~\eqref{eq:gardingPreparation2} show that~\eqref{eq:gardingPeriodic1} holds.
 \end{proof}

\section{Discretization of the Periodic Integral Equation}
\label{se:discretization}

In this section we firstly consider the discretization of the periodized
integral equation~\eqref{eq:PeriodicOperatorEquation} in spaces
of trigonometric polynomials. If the periodization satisfies certain 
smoothness conditions and if uniqueness of solution holds, 
convergence theory for the discretization is a consequence of 
the G\r{a}rding inequalities shown in Theorem~\ref{th:PeriodicGarding}.
Secondly we present fully discrete formulas for implementing a Galerkin 
discretization of the Lippmann-Schwinger integral 
equation~\eqref{eq:PeriodicOperatorEquation}.

For $N \in \N$ we define
$\mathbb{Z}^2_N = \{j \in \mathbb{Z}^2: \, - N/2 <j_{1,2} \leq N/2 \}$
and $\T_N = \mathrm{span} \{ \varphi_j: \, j\in \mathbb{Z}^2_N \}$,
where $\varphi_j \in L^2(\Omega_{\RR})$ are the $\alpha$-quasi-periodic
basis functions from~\eqref{eq:trigonometricPolynomial}. 
Note that the union $\cup_{N\in\N} \T_N$ is dense in $H^1_{\per}(\Omega_{\RR})$.
The orthogonal projection onto $\T_N$ is 
\[
  P_N: \, H^1_{\per}(\Omega_{\RR}) \to \T_N, \qquad 
  P_N(v) = \sum_{j\in \mathbb{Z}_N^2} \hat{v}(j) \varphi_j,
\]
where $\hat{v}(j)$ denotes as above the $j$th Fourier coefficient.
The next proposition recalls the standard convergence 
result for Galerkin discretizations of equations that satisfy 
a G\r{a}rding inequality, see, e.g.~\cite[Theorem 4.2.9]{Saute2007}, combined 
with the regularity result from Theorem~\ref{th:equivalence}(c).

\begin{proposition}
  \label{th:convergence}
  Assume that $q$ satisfies the assumptions of
  Theorem~\ref{th:PeriodicGarding} and 
  that~\eqref{eq:lippmannD} is uniquely solvable. 
  Then~\eqref{eq:PeriodicOperatorEquation} has a 
  unique solution $u \in H^1_{\per}(\Omega_{\RR})$, and
  then there is $N_0 \in \N$ such that the finite-dimensional
  problem to find $u_N \in \T_N$ such that
  \begin{align}
%     & (a) \qquad 
    \langle u_N - L_{\per}(q\nabla u_N), w_N \rangle_{H^1_{\per}(\Omega_{\RR})}
    = \langle f, w_N \rangle_{H^1_{\per}(\Omega_{\RR})}
    \quad \text{for all $w_N \in \T_N$} 
%    \label{eq:discQNegative}
%    & (b) \qquad 
%     \langle u_N - L_{\per}(q\nabla u_N), R(w_N) \rangle_{H^1_{\per}(\Omega_{\RR})}
%     = \langle f, R(w_N) \rangle_{H^1_{\per}(\Omega_{\RR})}
%     \quad \text{for all $w_N \in \T_N$,} 
  \end{align}
  possesses a unique solution for all $N\geq N_0$ 
  and $f \in H^1_{\per}(\Omega_{\RR})$. In this case
  \[
    \| u_N  - u \|_{H^1_{\per}(\Omega_{\RR})}
    \leq C \inf_{w_N \in \T_N} \| w_N -u \|_{H^1_{\per}(\Omega_{\RR})}
    \leq C N^{-s} \| u \|_{H^{1+s}_{\per}(\Omega_{\RR})}, \quad 0 \leq s < 1/2,
  \]
  with a constant $C$ independent of $N \geq N_0$.
\end{proposition}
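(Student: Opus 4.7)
The plan is to chain together three ingredients that are already in hand: the Gårding inequality of Theorem~\ref{th:PeriodicGarding}, the equivalence result of Theorem~\ref{th:equivalence}(b), and the standard Galerkin theory for coercive-plus-compact operators, then invoke Theorem~\ref{th:equivalence}(c) to turn the abstract quasi-optimality bound into an algebraic rate.

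First I would rewrite the operator on the left of~\eqref{eq:PeriodicOperatorEquation} as $A = I - L_{\per}(q \nabla \cdot) \colon H^1_{\per}(\Omega_R) \to H^1_{\per}(\Omega_R)$. By Theorem~\ref{th:PeriodicGarding}, $A = A_0 - K$ where $A_0$ is the coercive self-map associated with $\|\cdot\|_{H^1_{\per}(\Omega_R)}^2$ and $K$ is compact, so $A$ is Fredholm of index zero. Unique solvability of~\eqref{eq:lippmannD}, combined with Theorem~\ref{th:equivalence}(b), gives injectivity of $A$, and Fredholm theory then upgrades injectivity to bijectivity with a bounded inverse. This takes care of the unique solvability of~\eqref{eq:PeriodicOperatorEquation} for arbitrary $f \in H^1_{\per}(\Omega_R)$.

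Next I would apply the textbook argument for Galerkin discretizations of equations of the form coercive-plus-compact on a family of conforming subspaces $\T_N$ whose union is dense in $H^1_{\per}(\Omega_R)$. The crucial ingredient is that $(I - P_N)K \to 0$ in operator norm as $N \to \infty$ because $K$ is compact and $P_N \to I$ strongly. This pointwise convergence of $P_N$ follows from the explicit Fourier definition of $P_N$ and the density of trigonometric polynomials in $H^1_{\per}(\Omega_R)$. A standard perturbation argument (as in~\cite[Theorem 4.2.9]{Saute2007}) then produces an $N_0$ such that the Galerkin operator $P_N A|_{\T_N}$ is invertible for all $N \geq N_0$ with uniformly bounded inverse, giving both unique solvability of the discrete problem and a Céa-type estimate $\|u - u_N\|_{H^1_{\per}(\Omega_R)} \leq C \inf_{w_N \in \T_N} \|u - w_N\|_{H^1_{\per}(\Omega_R)}$ with $C$ independent of $N$.

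Finally I would convert the best-approximation error into the stated rate. Since $P_N$ is the $L^2$-orthogonal projection onto $\T_N$ realized by truncation of the Fourier series in the basis $\{\varphi_j\}$, the Parseval identity gives, for any $u \in H^{1+s}_{\per}(\Omega_R)$ with $s \geq 0$, the estimate $\|u - P_N u\|_{H^1_{\per}(\Omega_R)} \leq C N^{-s} \|u\|_{H^{1+s}_{\per}(\Omega_R)}$ by separating out Fourier modes with $|j| > N/2$. Theorem~\ref{th:equivalence}(c) guarantees that the exact solution $u$ indeed lies in $H^s_{\per}(\Omega_R)$ for every $s < 3/2$, hence in $H^{1+s}_{\per}(\Omega_R)$ for every $s < 1/2$, which is exactly the range in the statement.

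The main obstacle I expect is not any individual step, since each one is essentially a quotation, but rather verifying the hypothesis $(I - P_N)K \to 0$ in operator norm with enough care: here $K$ is the specific compact operator provided by Theorem~\ref{th:PeriodicGarding}, and one has to check that its image sits in a compact subset of $H^1_{\per}(\Omega_R)$ on which the Fourier truncations $P_N$ converge uniformly. This is a standard fact but is the one spot where the $H^1_{\per}$ framework and the structure of the Gårding splitting must be reconciled carefully.
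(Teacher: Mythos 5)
Your proposal is correct and follows exactly the route the paper intends: the paper does not spell out a proof but simply declares that the result follows from the standard Galerkin theory for operators satisfying a G\r{a}rding inequality (citing Sauter--Schwab, Theorem~4.2.9), combined with Theorem~\ref{th:PeriodicGarding} for the G\r{a}rding splitting, Theorem~\ref{th:equivalence}(b) for injectivity, and Theorem~\ref{th:equivalence}(c) for the regularity yielding the rate $N^{-s}$, $s<1/2$. Your write-up is a faithful expansion of that same argument, including the correct observation that $(I-P_N)K\to 0$ in operator norm is the one non-automatic ingredient.
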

\begin{remark}
\label{th:RemarkConvergence}
  The convergence rate increases to $s+1-t$ if one measures the error in the 
  weaker Sobolev norms of $H^t_{\per}(\Omega_{\RR})$, $1/2<t<1$.
  This can be shown using adjoint estimates (see, e.g.~\cite[Section 4.2]{Saute2007} 
  for the general technique). However, the (linear) rate saturates at $t=1/2$, 
  since the integral operator is not bounded on $H^t_{\per}(\Omega_{\RR})$ 
  for $t < 1/2$, that is, the $L^2$-error decays with a linear rate.
\end{remark}

Applying $P_N$ to the infinite-dimensional
problem~\eqref{eq:PeriodicOperatorEquation}, and 
exploiting that $P_N$ commutes with the periodic 
convolution operator $L_{\per}$, we obtain 
the discrete problem to find $u_N \in \T_N$ such that
\begin{equation}
  \label{eq:DiscreteEquation}
  u_N - L_{\per}(P_N(q\nabla u_N)) = L_{\per}(P_N f).
\end{equation}
Fast methods to evaluate the discretized operator
in~\eqref{eq:DiscreteEquation} exploit that the
application of $L_{\per}$ to a trigonometric polynomial
in $\T_N$ can be explicitly computed using an
$\alpha$-quasi-periodic discrete Fourier transform that 
we call $\F_{N}$. This transform maps point values of
a trigonometric polynomial $\varphi_j$ 
(see~\eqref{eq:trigonometricPolynomial}) to the 
Fourier coefficients of the polynomial.
If $h := (2\pi/N, \, 4\pi\RR/N)^\top$ (a column vector), then
\[
  \hat{v}_N(j) = \frac{\sqrt{4\pi\RR}}{N^2}
  \sum_{l\in \mathbb{Z}_N^2} v_N(l \cdot h)
  \exp\big(- 2\pi\i \,  (j_1+\alpha,j_2)^\top \cdot l /N\big),
  \qquad j \in \Z^2_N.
\]
(Vectors $j,l \in \mathbb{Z}_N^2$ are interpreted as a column vectors.)
This defines the transform $\F_{N}$ mapping
$(v_N(j \cdot h))_{j\in\Z^2_N}$ to $(\hat{v}_N(j))_{j\in \Z^2_N}$.
The inverse $\F_{N}^{-1}$ is explicitly given by
\[
  v_N(j \cdot h) = \frac{1}{\sqrt{4\pi\RR}} \sum_{l\in \mathbb{Z}_N^2}
  \hat{v}_N(l) \exp\left(2\pi\i \, (l_1+\alpha,l_2)^\top \cdot j/N \right), \qquad j \in \Z^2_N.
\]
Both $\F_{N}$ and its inverse are linear operators on 
$\C^2_N = \{ (c_n)_{n \in \Z^2_N}: \, c_n \in \C \}$.
The restriction operator $R_{N,M}$ from $\C^2_{N}$ to 
$\C^2_{M}$, $N > M$, is defined by
$R_{N,M}(a) = b$ where $b(j) = a(j)$ for $j\in\Z^2_M$.
The related extension operator $E_{M,N}$ from $\C^2_{M}$ to 
$\C^2_{N}$, $M<N$, is defined by $E_{M,N}(a) = b$ where 
$b(j) = a(j)$ for $j\in\Z^2_M$ and $b(j) = 0$ else.

For the next lemma, we introduce the notation $A \bullet B = (A_{ij} B_{ij})_{i,j =1}^M$ 
for the componentwise product of two matrices $A,B \in \C^{M\times M}$.

\begin{lemma}
  \label{eq:coeffMultipli}
  The Fourier coefficients of $q \partial_{\ell} u_N$, $\ell=1,2$, are given by
  \[
    (\widehat{q\partial_{\ell} u}_N(j))_{j\in \Z^2_N}
    = R_{3N,N} \mathcal{F}_{3N}\big[
      \mathcal{F}_{3N}^{-1}\big(E_{2N,3N} (\hat{q}_{2N}(j))_{j\in\Z^2_N} \big)
      \bullet
      \mathcal{F}_{3N}^{-1}\big(E_{N,3N} (w_{\ell}(j) \hat{u}_N(j))_{j\in\Z^2_N} \big) \hspace*{-0.5mm} \big]
  \]
  where $w_1(j) = \i (j_1+\alpha)$ and $w_2(j) = \i j_2 \pi /\RR$ for $j\in \Z^2$.
\end{lemma}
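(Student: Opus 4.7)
The plan is to identify $\widehat{q\partial_\ell u_N}(j)$ for $j\in\Z_N^2$ with the Fourier coefficients of a trigonometric polynomial in $\T_{3N}$, so that an aliasing-free FFT-based product on the $3N \times 3N$ grid computes them exactly.

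First I would use $u_N = \sum_{k\in\Z_N^2}\hat u_N(k)\varphi_k$ together with $\partial_\ell \varphi_k = w_\ell(k)\varphi_k$, where $w_1(k)=\i(k_1+\alpha)$ and $w_2(k) = \i k_2 \pi/R$, to obtain $\partial_\ell u_N \in \T_N$ with Fourier coefficients $w_\ell(k)\hat u_N(k)$ on $\Z_N^2$. Expanding $\widehat{q\partial_\ell u_N}(j) = \int_{\Omega_R} q\,\partial_\ell u_N\,\overline{\varphi_j}$ and evaluating the elementary integrals $\int q\,\varphi_k\overline{\varphi_j}$, in which the $\alpha$-phases of $\varphi_k$ and $\overline{\varphi_j}$ cancel, yields a finite convolution involving only the Fourier coefficients $\hat q(j-k)$ with $j, k \in \Z_N^2$. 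Since the differences satisfy $j-k \in \{-N+1,\ldots,N-1\}^2 \subset \Z_{2N}^2$, only Fourier coefficients of $q$ indexed in $\Z_{2N}^2$ appear in the sum, and replacing $q$ by its $\T_{2N}$-truncation $q_{2N}$ therefore leaves $\widehat{q\partial_\ell u_N}(j)$ unchanged for every $j \in \Z_N^2$.

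Second I would observe that $q_{2N}\partial_\ell u_N$ is itself a trigonometric polynomial whose Fourier coefficients are supported in the Minkowski sum $\Z_{2N}^2 + \Z_N^2$, which lies inside $\Z_{3N}^2$ by an elementary index count (assuming $N$ even so that $3N/2$ is an integer). Consequently $q_{2N}\partial_\ell u_N \in \T_{3N}$, and $\F_{3N}$ applied to its grid values on the $3N \times 3N$ node set returns its exact Fourier coefficients on $\Z_{3N}^2$, with no aliasing. These grid values are obtained as the pointwise product ($\bullet$) of the grid values of $q_{2N}$ and of $\partial_\ell u_N$ at the $3N \times 3N$ nodes, and both sets of grid values in turn come from applying $\F_{3N}^{-1}$ to the zero-padded Fourier arrays $E_{2N,3N}(\hat q_{2N}(j))_{j\in\Z_{2N}^2}$ and $E_{N,3N}(w_\ell(j)\hat u_N(j))_{j\in\Z_N^2}$, respectively. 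Restricting via $R_{3N,N}$ to $\Z_N^2$ and combining with the first step yields the stated identity.

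The main obstacle is essentially bookkeeping: one must verify that the Minkowski sum $\Z_{2N}^2 + \Z_N^2$ is contained in $\Z_{3N}^2$ so that no aliasing is introduced by the circular convolution implicit in the FFT-based product, and one must carefully track the normalization constants that propagate through the chain $\F_{3N}^{-1}, \bullet, \F_{3N}$. Once those checks are in place, the identity reduces to the standard FFT-interpolation property, namely that $\F_{3N}$ and $\F_{3N}^{-1}$ are mutually inverse on trigonometric polynomials belonging to $\T_{3N}$.
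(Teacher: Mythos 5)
Your proof is correct and follows essentially the same route as the paper's: compute the exact convolution formula for $\widehat{q\partial_\ell u}_N(j)$, observe that for $j\in\Z_N^2$ only coefficients $\hat q(m)$ with $m\in\Z_{2N}^2$ enter so that $q$ may be replaced by its truncation $q_{2N}$, note $q_{2N}\partial_\ell u_N\in\T_{3N}$, and then evaluate exactly via $\F_{3N}$ on the zero-padded grid values. Your additional remarks on the Minkowski-sum index count and the normalization bookkeeping are accurate but do not constitute a departure from the paper's argument.
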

\begin{proof}
  For $u_N \in \T_N$, $j \in \Z^2$, and $\ell=1,2$,
  \begin{align}
    \label{eq:convolution}
      4\pi\RR \,\widehat{q\partial_\ell u}_N (j) 
       &  = 4\pi\RR \int_{\Omega_{\RR}}q\partial_\ell u_N\ol{\varphi_j}\d{x}
      = 4\pi\RR \sum_{m\in\Z^2_N}\widehat{\partial_\ell u}_N(m)\int_{\Omega_{\RR}}q\ol{\varphi_j}\varphi_m\d{x}\\
      & =  \sum_{m\in\Z^2_N}\widehat{\partial_\ell u}_N(m) 
        \int_{\Omega_{\RR}} q(x) e^{-\i[(j_1 -m_1)x_1 + (j_2-m_2) x_2 \pi/\RR]} \d{x} \nonumber \\
      & = (4\pi\RR)^{1/2} \sum_{m\in\Z^2_N} \widehat{\partial_\ell u}_N(m) \hat{q}(j-m). \nonumber
  \end{align}
  If $j \in\Z^2_N$, then the coefficient $\widehat{q\partial_\ell u_N}(j)$
  merely depends on $\hat{q}(m)$ for $m \in \Z^2_{2N}$. Hence,
  $\widehat{q\partial_\ell u_N }(j) = \widehat{q_{2N} \partial_\ell u_N} (j)$
  for $j\in \Z^2_N$. Obviously, $q_{2N} \partial_\ell u_N$ belongs to
  $\T_{3N}$. Hence, the Fourier
  coefficients of $q_{2N}\partial_\ell u_N$ are given by
  $\mathcal{F}_{3N}$ applied to the grid values of this
  function at $j \cdot h$, $j\in \mathbb{Z}_{3N}^2$.
  The grid values of $\widehat{\partial_\ell u_N}$ are given by
  $\mathcal{F}_{3N}^{-1} (E_{N,3N} (\widehat{\partial_\ell u}_N(j)_{j\in\Z^2_N} )$,
  and the grid values of $q_{2N}$ can be computed analogously.
  Finally, taking a partial derivative with respect to $x_1$
  or $x_2$ of $u$ yields a multiplication of the $j$th Fourier
  coefficient $\hat{u}(j)$ by $\i (j_1+\alpha)$ and
  $\i j_2 \pi /\RR$, respectively.
\end{proof}

In Lemma~\ref{th:boundKh} we computed the Fourier coefficients of
the kernel $\K_{\RR}$. The kernel $\K_{\smooth}$ used to define
the periodized potential $L_{\per}$ is the product of $\K_{\RR}$
with the smooth function $\chi$ (see~\eqref{eq:kSmooth}). Hence,
the Fourier coefficients of $\K_{\smooth}$ are convolutions of
the $\hat\K_{\RR}(j)$ with
$\hat{\chi}(j_2) = (4\pi\RR)^{-1/2} \int_{-\RR}^\RR \exp(-\i j_2 \pi x_2/R) \chi(x_2) \d{x_2}$,
\[
  \hat{\K}_\smooth(j) = \frac{1}{(4\pi\RR)^{1/2}}
  \sum_{m\in\Z^2_N} \hat{\K}_{\RR}(j_1,m_2)\hat{\chi}(j_2-m_2), \qquad j \in \Z^2.
\]
The latter formula can be seen by a computation similar
to~\eqref{eq:convolution}. Note that $\chi$ is a smooth function, 
which means that the Fourier coefficients $\hat{\chi}$ in the 
last formula are rapidly decreasing, that is, the truncation
the last series converges rapidly to the exact value.
The convolution structure of $L_{\per}$ finally shows that
\begin{equation}
  \label{eq:evaluateL}
  \widehat{(L_{\per} f)}(j)
  = (4\pi\RR)^{1/2} \, \hat{\K}_\smooth(j)
  \Big[\i(j_1 + \alpha) \hat{f_1}(j) + \frac{\i j_2 \pi}{\RR}\hat{f_2}(j) \Big],
  \quad 
  f = \left( \begin{matrix} f_1 \\ f_2 \end{matrix} \right) \in L^2(\Omega_{\RR}, \C^2).
\end{equation}

The finite-dimensional operator $u_N \mapsto L_{\per}(P_N(q\nabla u_N))$
can now be evaluated in $O(N \log(N))$ operations by combining the 
formula of Lemma~\ref{eq:coeffMultipli} with~\eqref{eq:evaluateL}. 
The linear system~\eqref{eq:DiscreteEquation} can 
then be solved using iterative methods. Whenever one 
uses iterative techniques, one would of course like to precondition the 
linear system. The usual multi-grid preconditioning technique for integral 
equations of the second kind (see, e.g.,~\cite{Vaini2000}) does not 
apply here, since the integral operator is not compact. For the numerical 
experiments presented in the following section, we simply used the 
(unpreconditioned) GMRES algorithm from~\cite{Kelle1995}.

\section{Numerical Examples}
\label{se:numerics}

In this section we present several numerical examples that first 
check the quasi-optimal convergence rate of the trigonometric 
Galerkin scheme under investigation from 
Proposition~\ref{th:convergence}. The second aim of the examples 
is to show that the scheme is able to cope with spatially varying 
material configurations and hence can be applied when, e.g., boundary 
integral techniques are blocked. Third, we try to illustrate memory needs 
and computation times to give an impression about the performance of the 
scheme. Before going into details, let us emphasize again that one of the main 
advantages of the trigonometric Galerkin scheme is its straightforward and 
rather easy implementation, at least if a performant FFT routine and an 
efficient linear solver are already at hand.  
 
We first present a computational example for a very simple strip-like structure for 
which one can compute the scattered field for an incident plane wave analytically.
Checking that the numerically computed solution to the scattering problem converges 
well to the analytically known expression will be the first test for the correctness of the 
code (and a -- due to its simplicity limited -- test for the convergence rate, too).
Recall that we aim to compute the scattered field for an 
incident field $u^i(x_1,x_2) = \exp(\i k(\cos(\theta)x_1 - \sin(\theta)x_2))$
with incident angle $\theta$. For this first test we choose $k = \pi/2$ and  $\theta = \pi/4$
and approximate the solution in $\mathcal{T}_N$ where $N = 2^n$ for $n = 6,...,11$. 
For this example, $D = (-\pi,\pi)\times(-0.75,0.75)$ is a strip, we choose 
$\Omega_R = (-\pi,\pi)\times(-2,2)$, and the contrast $q$ equals two in $D$ 
(compare Figure~\ref{fig:stripExperiment}(a)). For this setting one can 
explicitly compute the scattered field and use the analytic expression for comparison.  
In the Figure~\ref{fig:stripExperiment}(b) we show the relative error between the numerical and the 
analytical solution in the norms $H^s_{\per}(\Omega_R)$ where $s = 0, 0.5, 1$. The relative error 
measured in the norm $H^1_{\per}(\Omega_R)$ fits quite well to the theoretical statement 
in Proposition~\ref{th:convergence}. Furthermore, if one measures the relative error 
in the norm $H^s_{\per}(\Omega_R)$ for $s = 0$ and $s = 0.5$ the experiment confirms 
the statement of Remark~\ref{th:RemarkConvergence}. 
All computations in this and in all following numerical experiments were done on a machine
with an Intel Xeon 3200 quad core processor and 12 GB memory. The trigonometric 
Galerkin discretization of the volume integral equation was implemented in MATLAB, 
relying on FFTW routines~\cite{Frigo2005} that MATLAB is able to execute in parallel. 
The linear system was solved by the GMRES iteration 
from~\cite{Kelle1995}. The iteration was stopped when the relative residual reduction 
factor was less than $10^{-5}$ (this parameter was chosen for all later experiments). 
Figure~\ref{tab:timesStrip} shows computation times and 
the number of GMRES iterations for this numerical test. Obviously, the computation 
time of the scheme gets large when $N$ becomes very large due to memory needs. 
The number of GMRES iterations slowly decreases in $N$ from $7$ to $5$.

\begin{figure}[h!!!!t!!b]
    \centering
    \begin{tabular}{c c}
      % color version
      \hspace*{-5mm}
      \includegraphics[height = 6.5cm]{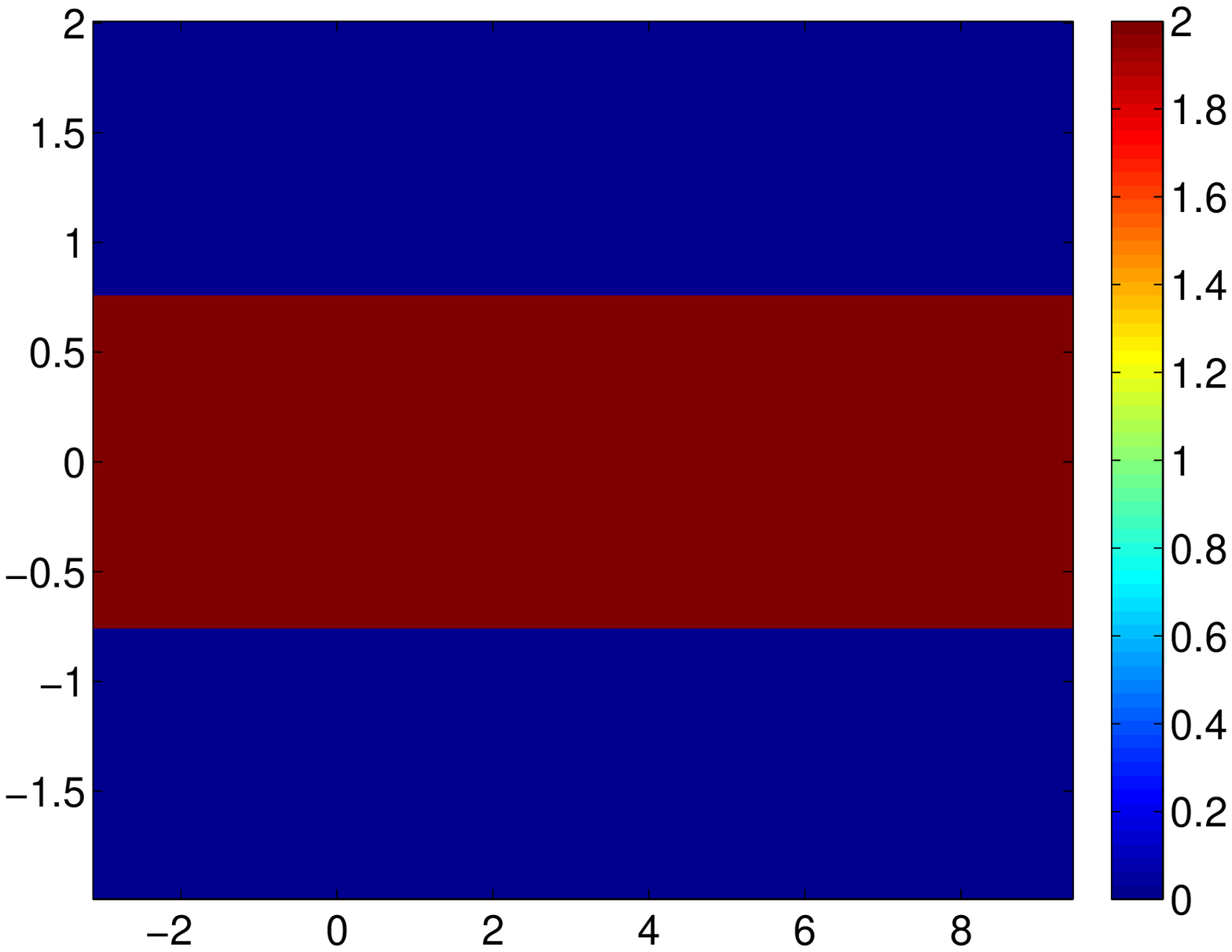}  & \includegraphics[height =6.5cm]{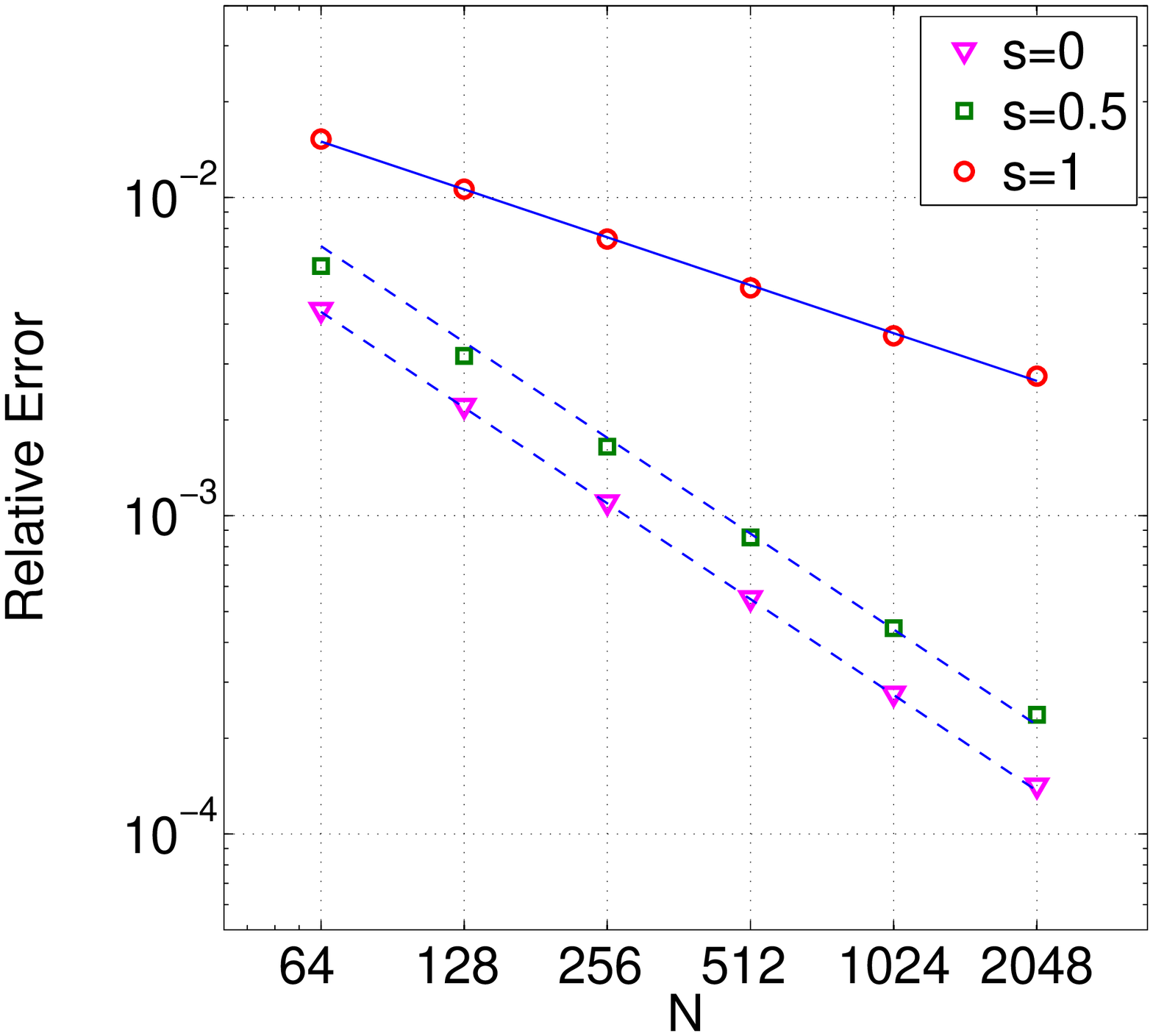}\\ 
      % grayscale version
      % \includegraphics[height = 6.5cm]{strip_gray}  & \includegraphics[height =6.5cm]{ConvOrderStrip_kpi2_R075.eps}\\ 
       (a) & (b)
    \end{tabular}
    \caption{(a) Two periods (in the horizontal variable) of the strip structure with contrast equal to two. 
    (b) Relative error between the numerically approximated solution and the analytically computed reference solution measured 
in $H^s_{\per}$-norm for scattering from the strip shown in (a). Circles, kites, triangles correspond to $s = 1$, $s = 0.5$ 
and $s = 0$, respectively. The continuous line and the dotted lines indicate the convergence order 
0.5 and 1, respectively. The discretization parameter is $N = 2^n$ for $n = 6,...,11$.}
     \label{fig:stripExperiment}
\end{figure}

\begin{table}[h!!!!t!!!b!]
 \centerline{
 \begin{tabular}{ r | c c c c c c }\hline
 $N$                & 64 & 128 & 256 & 512 & 1024 & 2048 \\ \hline 
 Computation time(s) for strip-structure & 0.3 & 1.1 & 3.7 & 21.2 & 131.7 & 463.7 \\ 
 $\sharp$ GMRES iterations for strip-structure & 7 & 6 & 6 & 6 & 6 & 5 \\ \hline
 \end{tabular}}
 \caption{Computation times and number of GMRES iterations for the computation 
 of the errors for the simple strip structure shown in Figure~\ref{fig:stripExperiment}. 
 The parameter $N$ is the discretization parameter of the trigonometric Galerkin scheme.} 
 \label{tab:timesStrip}
\end{table}

Of course, the numerical results for the strip structure from the last example merely 
provide a first test that the algorithm computes correct solutions. For further tests and 
illustrations of the algorithm, we consider more complicated structures where the contrast
varies smoothly within subdomains and jumps across subdomain borders. As we mentioned 
in the introduction and confirmed in Lemma~\ref{eq:coeffMultipli}, it is essential for the 
Galerkin scheme to have explicit values of the Fourier coefficients $\hat{q}_{2N}$ of the 
contrast $q$ at hand. In principle, these values could be approximated using FFTs. 
However, we found that whenever one is able to compute these Fourier coefficients 
analytically, this results in considerably more accurate computations. In the examples 
below, we explain case-by-case how to compute these Fourier coefficients for a wide class of polynomially 
or exponentially varying materials. For complicated material shapes, it is usually impossible 
to compute the Fourier coefficients explicitly. Using partial integrations, one is however 
able to come up with semi-analytic expressions that merely require a one-dimensional 
integration of a periodic and piecewise analytic function for evaluation.   

\begin{figure}[!ht]
    \centering
    \begin{tabular}{c c}
      % color version:  
      \hspace*{-10mm}  \includegraphics[height = 6.5cm]{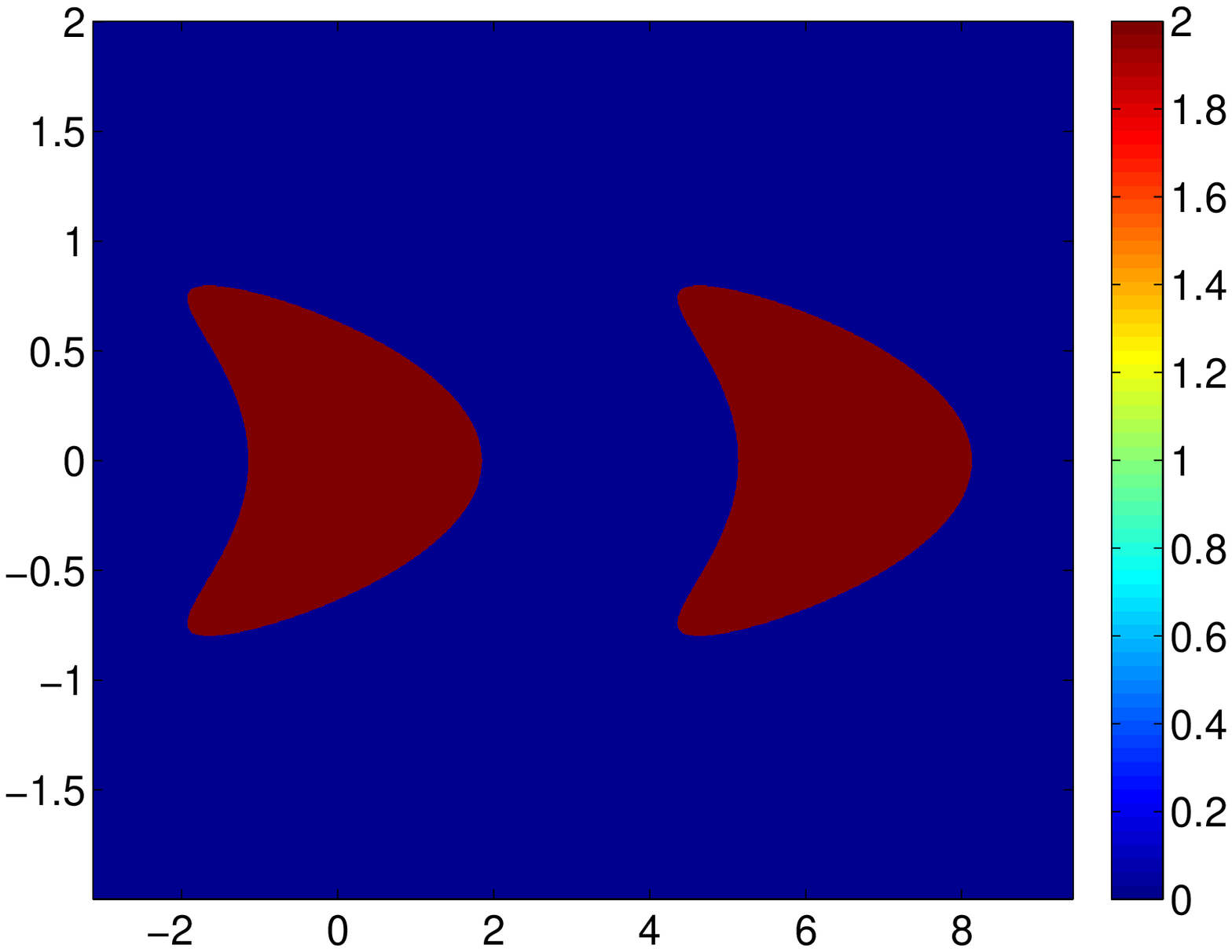}  \hspace*{-5mm} & \includegraphics[height = 6.5cm]{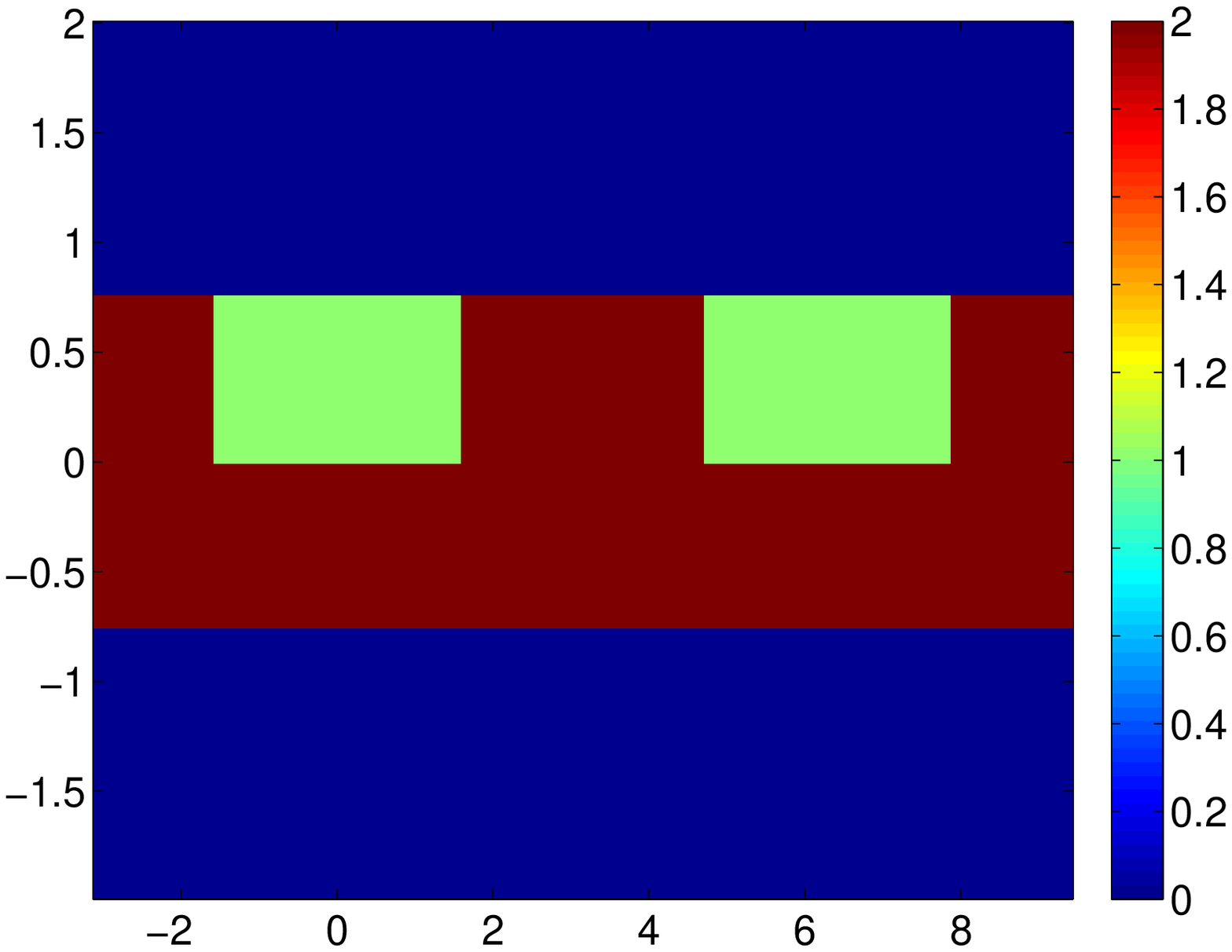} \\ 
       (a) & (b) \\
       \hspace*{-10mm} \includegraphics[height = 6.5cm]{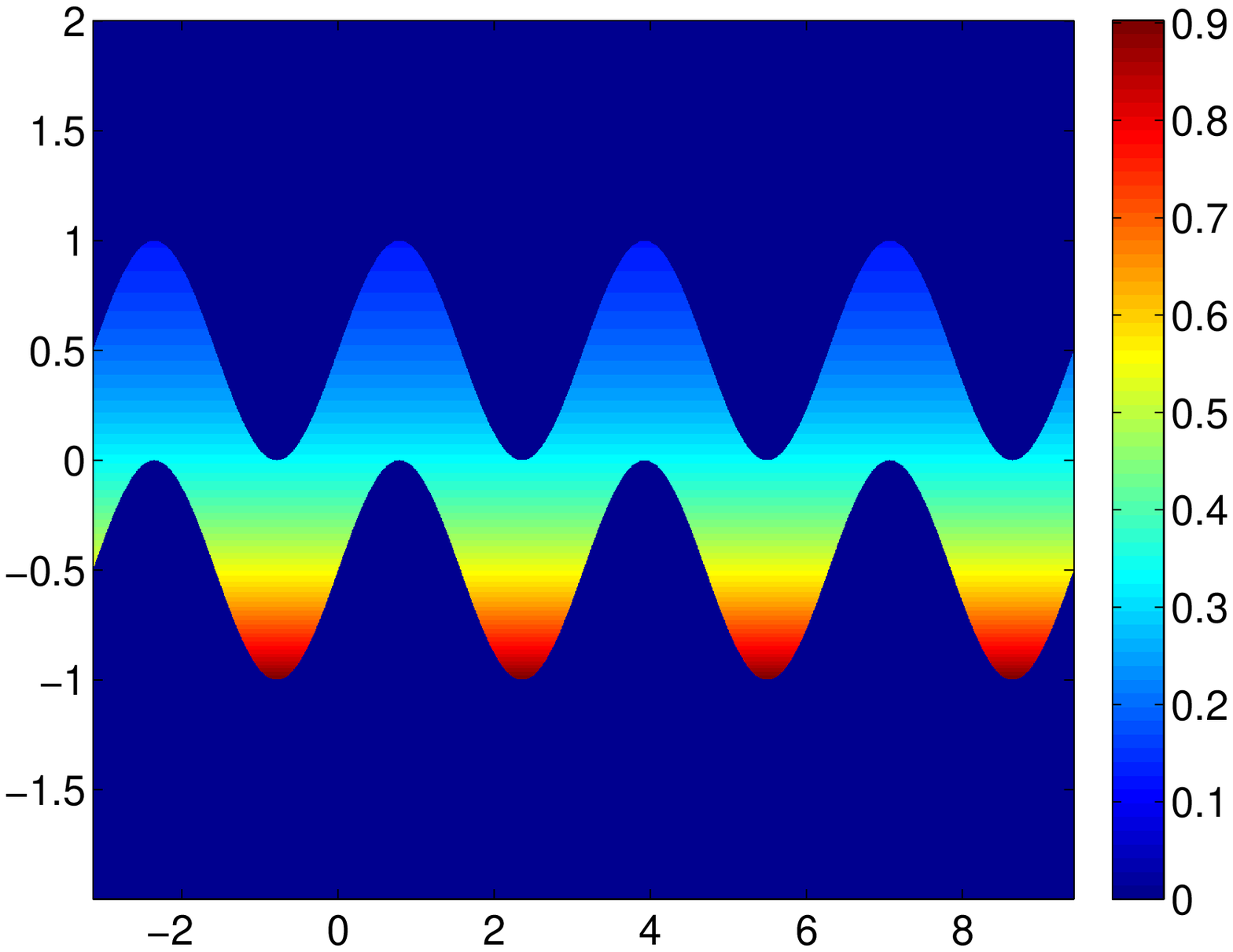}  \hspace*{-5mm} & \includegraphics[height = 6.5cm]{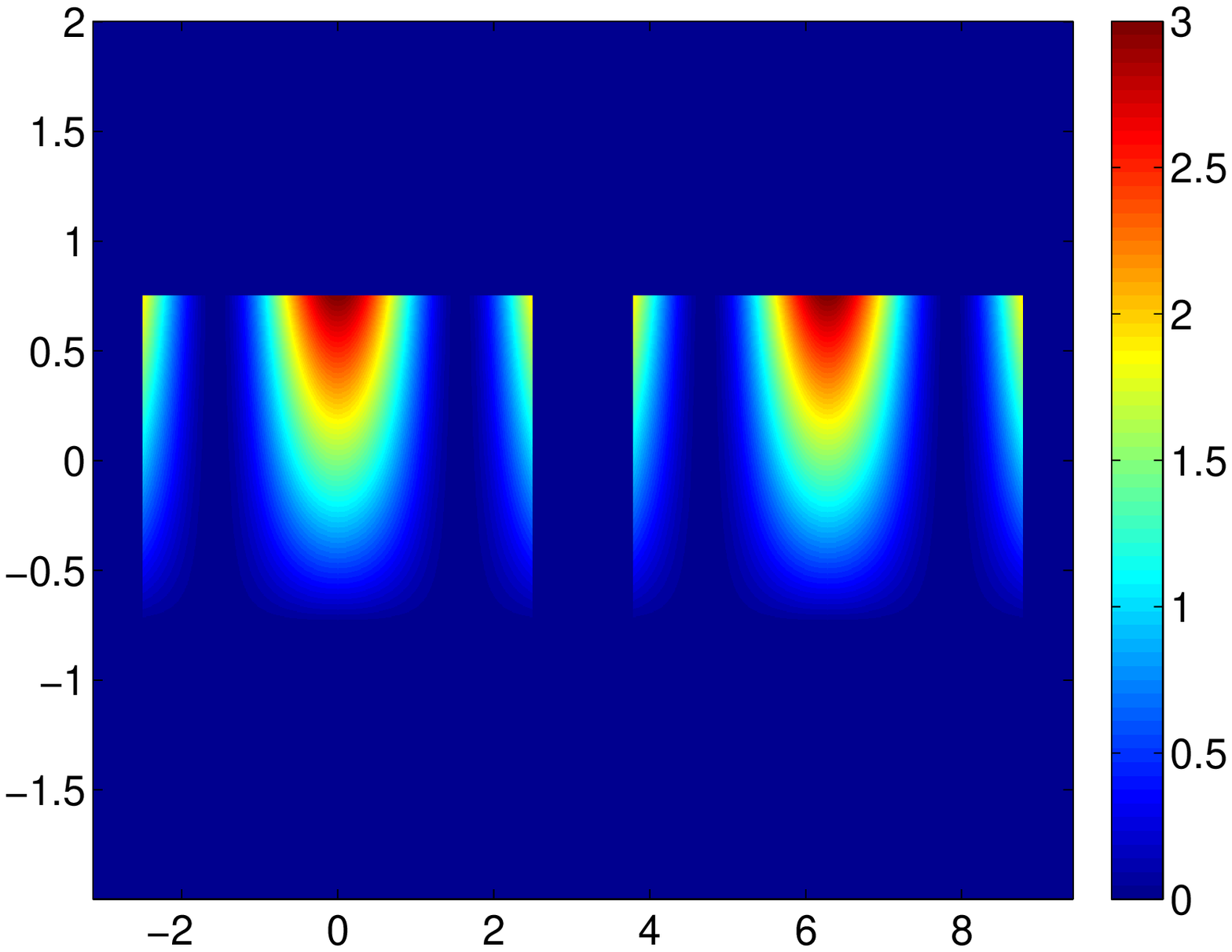} \\ 
        (c) & (d)
%       % grayscale version:  
%       \hspace*{-5mm}  \includegraphics[height = 6.5cm]{sin_vary_gray}  \hspace*{-5mm} & \includegraphics[height = 6.5cm]{rectangle_vary_gray} \\ 
%       (a) & (b) \\
%       \hspace*{-5mm}  \includegraphics[height = 6.5cm]{kite_gray}  \hspace*{-5mm} & \includegraphics[height = 6.5cm]{strip_jumps_gray} \\ 
%        (c) & (d)
    \end{tabular}
        \caption{The four plots show two periods in the horizontal variable 
        of the contrasts $q_{1,2,3,4}$ considered in the numerical experiments below. 
        (a) The piecewise constant kite-shaped contrast $q_1$. 
        (b) The piecewise constant contrast $q_2$ is supported in a strip. 
        (c) The contrast $q_3$ varies smoothly within a sinusoidally-shaped strip.
        (d) The contrast $q_4$ varies smoothly within a rectangle.}
        \label{fig:contrasts}
\end{figure}

Figure~\ref{fig:contrasts} shows the four contrasts $q_{1,2,3,4}$ that 
we consider in the experiments below. We start now by giving precise definitions 
of these four contrasts and we compute their Fourier coefficients (semi-)explicitly.
Afterwards, we present numerical examples for the different structures.
We would like to point out in advance that for all four examples the domain 
$\Omega_\RR$ will be chosen as $(-\pi,\pi)\times(-2,2)$, i.e., $\RR = 2$ always.

The contrast $q_1$ plotted in Figure~\ref{fig:contrasts}(a) consists of 
$2\pi$-periodically aligned kite-shaped inclusions with constant 
material parameter (the contrast equals to two inside the inclusion). 
The boundary of the central inclusion $D \subset (-\pi,\pi) \times (-2,2)$ is 
parametrized  by $t \mapsto (1.5\cos(t) + \cos(2t) -0.65, \sin(t))^\top, \ t \in [0,2\pi]$. 
The Fourier coefficients $\{ \hat{q}_1(j) \}_{j \in \Z^2}$ can simply be computed using Green's formula,
\begin{align*}
 \sqrt{8\pi} \ \hat{q}_1(j)
 & = \int_{\Omega_\RR}q(x)e^{-\i j_1x_1 - \i \frac{j_2\pi}{2} x_2} \d{x} \\
 & = 2 \int_{D} e^{-\i j_1x_1 - \i \frac{j_2\pi}{2} x_2} \d{x}
 = \frac{4\i}{j_2\pi}\int_{\partial D}\nu_2(x)e^{-\i j_1x_1 - i \frac{j_2\pi}{2}x_2} \d{s}\\
 &= \frac{4 \i}{j_2\pi} 
 \int_0^{2\pi}e^{-\i j_1z_1(t)-\i \frac{j_2\pi}{2}\sin(t)}(1.5\sin(t)+2\sin(2t)) \d{t}, \quad j_2 \neq 0.
\end{align*}
This integral can now be accurately evaluated numerically 
(we use the fourth-order convergent composite Simpson's rule). 

Similar techniques yield the Fourier coefficients of the contrast 
\[
  q_2 = \begin{cases}
       1 & \text{ in } D_1 := (-\pi/2,\pi/2)\times(0,0.75), \\
       2 & \text{ in } (-\pi,\pi)\times(-0.75,0.75) \setminus \ol{D_1},
     \end{cases}
\]  
that is plotted in Figure~\ref{fig:contrasts}(b).
The Fourier coefficients of $q_2$ can be computed explicitly, 
\begin{align*}
 \sqrt{8\pi} \ \hat{q}_2(j) 
 & = - \int_{D_1}e^{-\i j_1x_1 - \i j_2\pi x_2/2}\d x 
  + 2 \int_D e^{-\i j_1x_1 - \i j_2\pi x_2/2} \d x , \quad j \in \Z^2.
%    &= - \int_{-\pi/2}^{\pi/2}e^{-\i j_1x_1} \d{x_1} \int_{0}^{0.75}e^{-\i j_2\pi x_2/2}\d{x_2} 
%    + 2 \int_{-\pi}^{\pi} e^{-\i j_1x_1} \d{x_1} \int_{-0.75}^{0.75} e^{- \i j_2\pi x_2/2} \d{x_2} 
\end{align*}
Both integrals can of course be computed analytically, 
the first one equals for instance 
$4 \i / (\pi j_1 j_2) \, \sin(j_1 \pi / 2)  [1- \exp(-3/8 \, \pi \i j_1) ]$ 
for $j_{1,2} \not = 0$.

The contrast $q_3$ shown in Figure~\ref{fig:contrasts}(c) is defined 
as a smooth function on a sinusoidally shaped strip $D$. In detail,    
\begin{align*}
  D &= \big\{ \left( \begin{smallmatrix} x_1 \\ x_2 \end{smallmatrix} \right) \in \R^2: \ 
                       -\pi<x_1<\pi, \ \sin(2x_1) < 2 x_2 - 1 <  \sin(2x_1)\big\} \quad \text{and } \quad \\
  q_3(x) & = \begin{cases} e^{-x_2}/3 & \text{for } x = \left( \begin{smallmatrix} x_1 \\ x_2 \end{smallmatrix} \right) \in D, \\ 0 & \text{else}. \end{cases} 
 \end{align*}
In this case the Fourier coefficients of the contrast $q$ can be 
computed semi-analytically using Green's formula
\begin{align*}
 \sqrt{8\pi} \ \hat{q}_3(j)
 &= \int_{\Omega_\RR}q(x)e^{-\i j_1x_1 - \i j_2\pi x_2/2} \d{x} = \frac{1}{3} \int_{D}e^{-\i j_1x_1 - (1+\i j_2\pi/2)x_2} \d{x}\\
 &= \frac{-1/3}{1+\i j_2\pi/2}\int_{\partial D}\nu_2(x)e^{-\i j_1x_1 - (1+\i j_2\pi/2)x_2} \d{s}\\
 &= \frac{-1/3}{1+\i j_2\pi/2}\int_0^{2\pi}e^{-\i j_1t -(1+\i j_2 \pi/2)(\sin(2t)/2+1/2)} \d{t} \\
 & \quad + \frac{1/3}{1+\i j_2\pi/2}\int_0^{2\pi}e^{-\i j_1t -(1+\i j_2 \pi/2)(\sin(2t)/2-1/2)} \d{t}.
 % 1+\frac\i j_2\pi}{2} \neq 0 % CONDITION IS NOT NECESSARY 
\end{align*}
Again, we approximate these integrals with the fourth-order 
convergent composite Simpson's rule to get accurate 
approximations for the Fourier coefficients of $q_3$. 

\begin{remark} Of course, 
a similar integration-by-parts trick with respect to $x_1$ would still work 
if $q_3$ depends in a more complicated way on $x_2$. This shows that 
in principle the Fourier coefficients of contrasts that vary smoothly in 
one variable can be computed by approximating one-dimensional integrals 
of smooth functions.
\end{remark}

Finally, we define the contrast function $q_4$ plotted in 
Figure~\ref{fig:contrasts}(d) -- a contrast function that varies smoothly 
in a $2\pi$-periodic rectangle-shaped structure with support $\ol{D}$, 
$D = (-2.5,2.5)\times(-0.75,0.75)$. In detail, 
\[
 q_4(x) = 2\cos(x_1)^2(x_2+0.75)\quad \text{for } x = (x_1,x_2)^\top \in D
\]
and $q_4(x) = 0$ for points outside of $D$. The Fourier coefficients
of $q_4$ can be explicitly computed using integration-by-parts techniques 
we already used above. Omitting technical details, the result is that 
\[
  \hat{q}(j) = \frac{A(j_1)B(j_2)}{\sqrt{8\pi}} \quad \text{for } j = (j_1,j_2)^\top \in \Z^2,
\] 
where
% $r$, $\rho$ are the sizes of the rectangle in $x_1$- and $x_2$-dimension,
% CONCLUSION: r=5 and \rho = 1.5
% CONCLUSION CORRECT ??
\begin{align*}
A(j_1) &= \begin{cases}
          \frac{\sin(5 j_1)\big[(2\cos(10) + 1)/{j_1} - 8/{j_1^3}\big] - 4\cos(5 j_1)\sin(10)/{j_1^2}}{1-4/{j_1^2}} & j_1 \in \Z\setminus \{0, \pm 2\}, \\
           \sin(20)/4 + \sin(10) + 5  & j_1 = \pm 2,\\
            \sin(10)/2 + 5 & j_1 = 0,
         \end{cases} \\
B(j_2) &= \begin{cases}
        \frac{6\i}{j_2\pi} \exp(-3 \pi \i j_2 /4) - \frac{8\i}{(j_2\pi)^2}\sin(3 \pi j_2 / 4) & j_2 \neq 0, \\
         9/2 & j_2 = 0.
        \end{cases}
\end{align*}

\begin{remark}
  The last example shows that Fourier coefficients of contrasts of the form $q(x) = f_1(x_1) f_2(x_2)$ 
  can be computed (semi-)analytically if $f_{1,2}$ are trigonometric functions, exponentials, or 
  polynomials. The last example features a linear function $f_2(x_2) =  x_2 + 0.75$, however, 
  higher-degree polynomials could be treated as well using additional integrations by parts 
  reducing the polynomial degree.
\end{remark}

Since explicit analytic solutions for plane wave scattering problems 
involving the contrast functions $q_{1,2,3,4}$ are not known, we check 
convergence rates for these structures by computing a reference solution 
for very large discretization parameter $N$.  For all examples below, this 
reference solution is computed for $N = 3072$ using GMRES with a relative 
residual reduction factor of $10^{-8}$. 
The angle of the  incident plane wave is always chosen as $\theta = \pi/4$  
and the wave number always equals $k = \pi/2$. We check the convergence 
rates from Proposition~\ref{th:convergence} by computing  
scattered fields for discretization parameter $N = 2^n$, $n = 4,\dots,9$. As above, 
the GMRES algorithm is stopped when the relative residual reduction factor is less than $10^{-5}$.
Figure~\ref{fig:errorCurves} shows that the convergence order of the method in 
the energy norm $H^1_{\per}$ is in good agreement with the statement 
of Proposition~\ref{th:convergence}. Further, for all test cases, the rates of the error 
measured in $H^{1/2}_{\per}$ and in $L^2$ are in good agreement with the 
statement of Remark~\ref{th:RemarkConvergence}. Computation times and 
the number of iterations of the GMRES algorithm corresponding to the numerical 
experiments illustrated in Figure~\ref{fig:errorCurves} are shown in Table~\ref{tab:times}. 

\begin{figure}[h!!!!t!!!!!b!!!!!]
    \centering
    \begin{tabular}{c c}
       \hspace*{-5mm}  \includegraphics[height = 6.75cm]{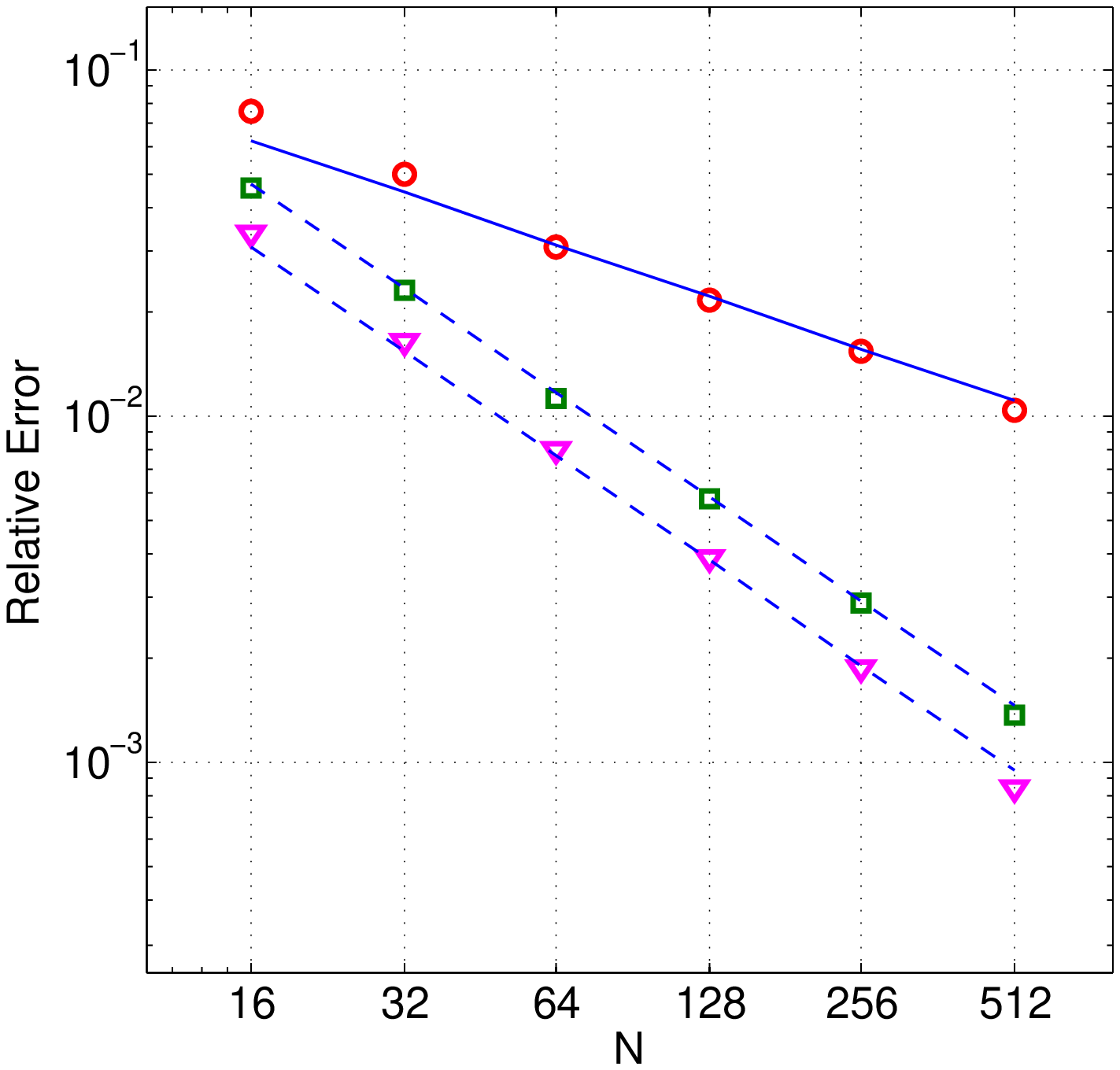}  \hspace*{5mm} & 
         \includegraphics[width = 7cm]{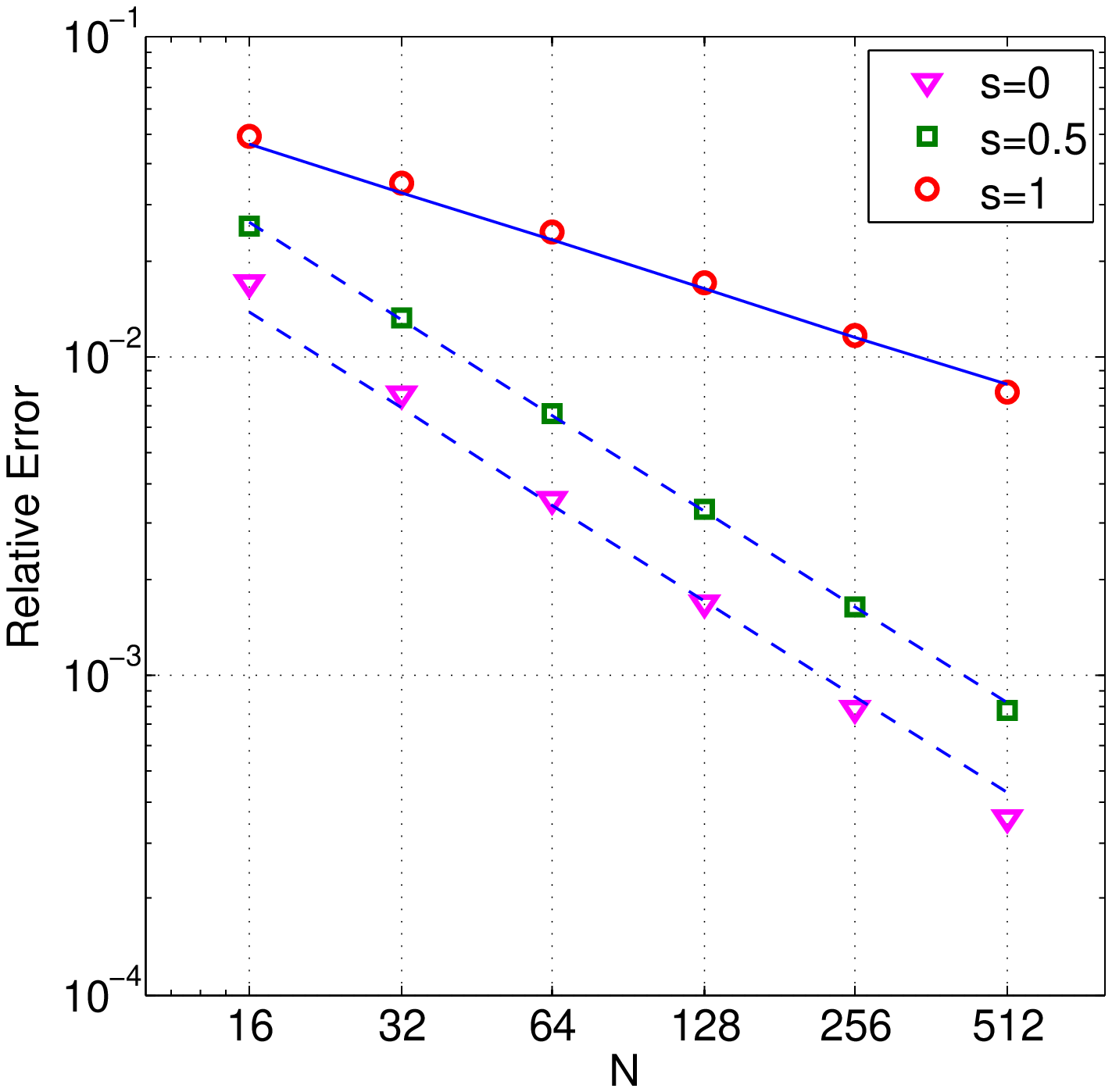}\\ 
       (a) & (b) \\
       \hspace*{-5mm} \includegraphics[width = 7cm]{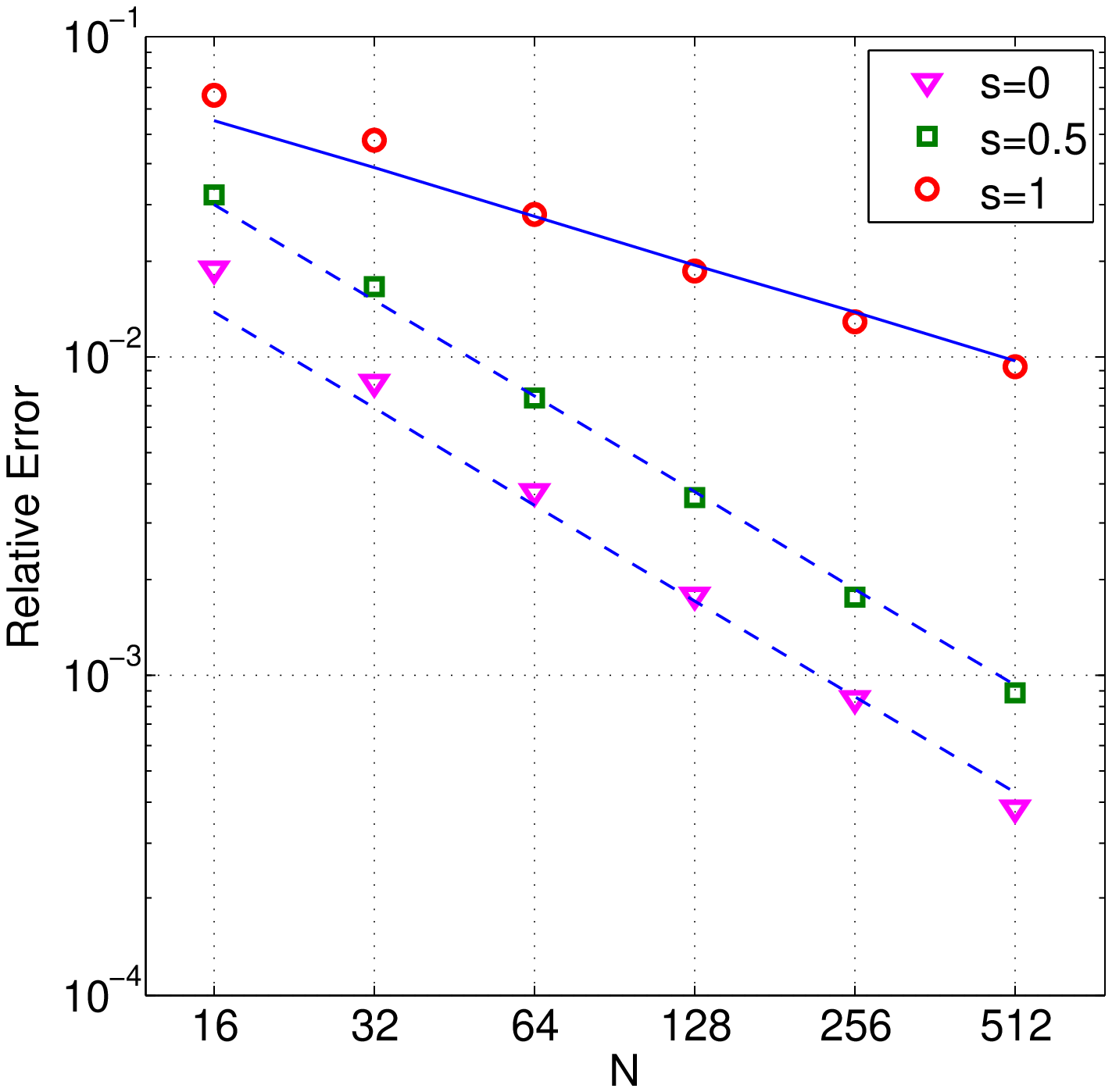}  \hspace*{5mm} & %
          \includegraphics[height = 6.75cm]{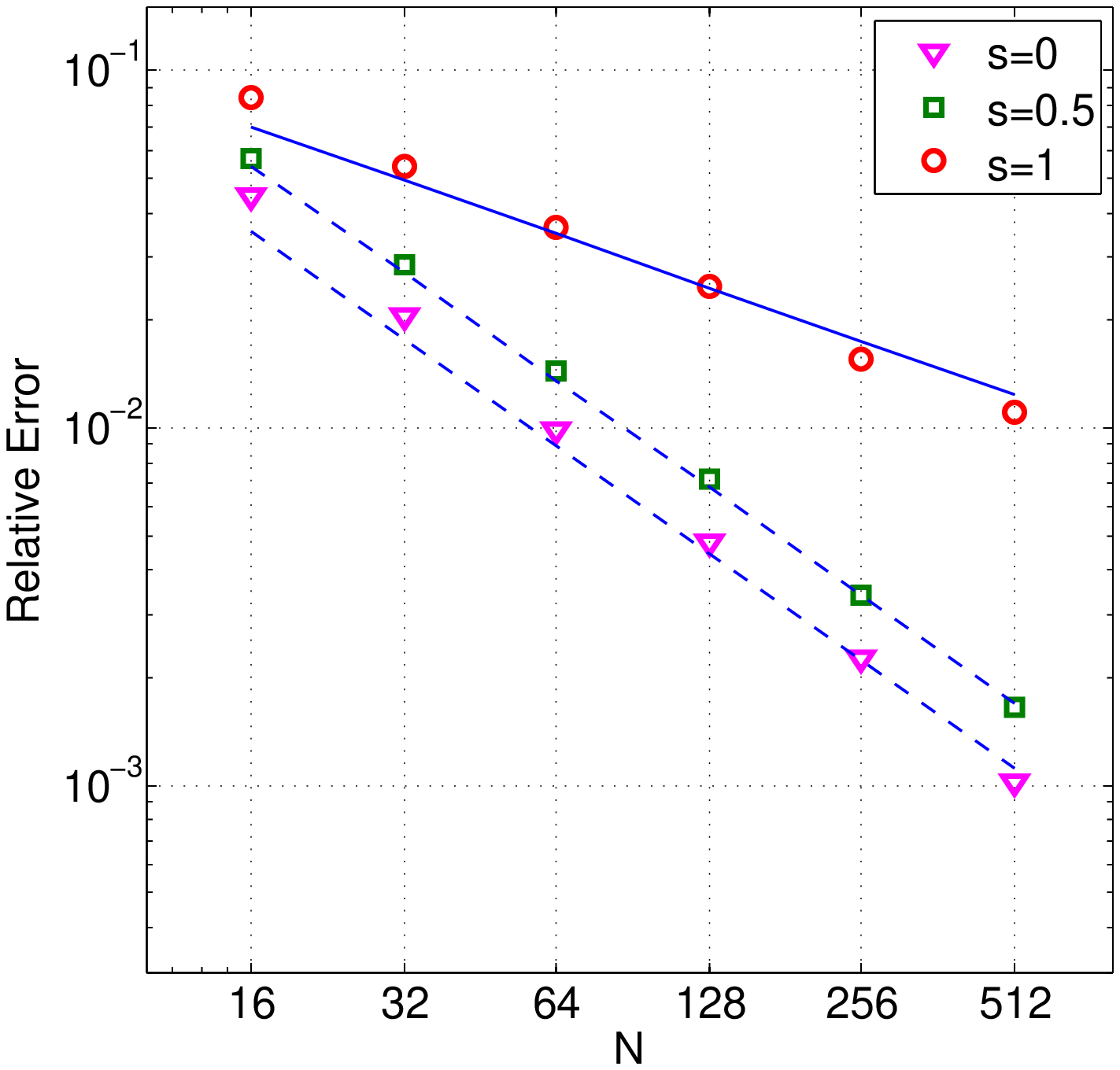} 
         \\ 
        (c) & (d)
    \end{tabular}
        \caption{Test for the convergence rate of the trigonometric Galerkin discretization for the different structures 
        presented in Figure~\ref{fig:contrasts}. The plots show the relative error in the $H^s_{\alpha,\per}$-norm 
        between the approximate solution ($N=2^n$, $n=4,\dots,9$) and the reference solution ($N=3072$), 
        plotted against the discretization parameter $N$. 
        Circles, kites, triangles correspond to $s = 1$, $s = 0.5$ and $s = 0$, respectively. 
        The continuous line and the dotted lines indicate the expected convergence orders 0.5 and 1, 
        respectively.
        (a) Results for the kite-shaped contrast $q_1$ from Figure~\ref{fig:contrasts}(a). 
        (b) Results for the piecewise constant contrast $q_2$ from Figure~\ref{fig:contrasts}(b). 
        (c) Results for the contrast $q_3$ that varies smoothly within a sinusoidal strip from Figure~\ref{fig:contrasts}(c). 
        (d) Results for the contrast $q_4$ that varies smoothly within a rectangle from Figure~\ref{fig:contrasts}(d).}
        \label{fig:errorCurves}
\end{figure}

\begin{table}[h!!!!t!!!b!]
 \centerline{
 \begin{tabular}{ r | c c c c }\hline
 $N$                & 64 & 128 & 256 & 512 \\ \hline  \hline
 Time(s) for $q_1$ (Figure~\ref{fig:contrasts}(a)) & 1.4 & 7 & 44 & 295  \\ 
 Time(s) for $q_2$ (Figure~\ref{fig:contrasts}(b)) & 1.7 & 5 & 16 & 47  \\ 
 Time(s) for $q_3$ (Figure~\ref{fig:contrasts}(c)) & 1.6 & 7 & 39 & 184  \\  
 Time(s) for $q_4$ (Figure~\ref{fig:contrasts}(d)) & 0.4 & 2 & 8 & 39  \\ \hline 
 $\sharp$ GMRES iterations for $q_1$ (Figure~\ref{fig:contrasts}(a)) & 10 & 11 & 11 & 11  \\ 
 $\sharp$ GMRES iterations for $q_2$ (Figure~\ref{fig:contrasts}(b)) & 12 & 12 & 12 & 12  \\ 
 $\sharp$ GMRES iterations for $q_3$ (Figure~\ref{fig:contrasts}(c)) & 6 & 6 & 6 & 6 \\  
 $\sharp$ GMRES iterations for $q_4$ (Figure~\ref{fig:contrasts}(c)) & 9 & 10 & 10 & 10 \\ \hline \hline
 \end{tabular}}
 \caption{Computation times and number of GMRES iterations for the computation of the error curves shown in Figure~\ref{fig:errorCurves}.
 Computing the reference solutions took roughly 1 hour for $q_{2}$ and $q_{4}$, 10 hours for $q_3$ and 16 hours for $q_{1}$.}
 \label{tab:times}
\end{table}

The last computational experiment illustrates the convergence 
of the trigonometric Galerkin technique using an error 
indicator resulting from energy conservation.
Recall the Rayleigh coefficients $\hat{u}^{\pm}_j$ of the scattered field 
from~\eqref{eq:RayleighCondition}. For the incident plane wave $u^i$ with 
incident angle $\theta$, we define similar 
coefficients by $\hat{u}^{i}_j =  \int_{-\pi}^{\pi} u^i(x_1, -h) \exp(-\i\alpha_j x_1) \d{x_1}$
for $j\in \Z$. Then Green's formula applied to equation~\eqref{eq:basic}
together with the Rayleigh expansion condition shows that 
\begin{equation}
\label{eq:energyEq}
 \sum_{j:k^2>\beta_j^2} \beta_j(|\hat{u}^{+}_j|^2 + |\hat{u}^{-}_j + \hat{u}^{i}_j|^2) = \beta_0.
\end{equation}
The sums 
\begin{align*}
E_{\mathrm{tra}}(\theta) := \sum_{j:k^2>\beta_j^2} \beta_j (|\hat{u}^{-}_j + \hat{u}^{i}_j|^2)/\beta_0, \qquad
E_{\mathrm{ref}}(\theta) := \sum_{j:k^2>\beta_j^2} \beta_j|\hat{u}^{+}_j|^2/\beta_0
\end{align*}
correspond to transmitted and reflected wave energies. In the following 
experiment, we compute the function 
\begin{equation}
  \label{eq:conservationError}
  \theta \mapsto |1 - E_{\mathrm{tra}}(\theta) - E_{\mathrm{ref}}(\theta)|
\end{equation} 
for many angles $\theta$ to obtain an error indicator for the numerical accuracy 
of the integral equation solver in dependence on the angle of the incident field. 
This angle, $\theta$, is sampled at 200 points uniformly 
distributed in the interval $[0.2,1.2]$. The wave number $k$ equals 2.5. 
To compute the energy curves shown in Figure~\ref{fig:7}(a) the scattered 
field is approximated in $\mathcal{T}_N$ where $N = 2^8=256$.
The relative residual reduction factor for the GMRES iteration is in this 
experiment always chosen as $10^{-8}$. With this choice, the computation 
time for solving for one fixed incident angle $\theta$ is about 8 seconds.
In Figure~\ref{fig:7}(b) we check the error indicator of energy conservation
from~\eqref{eq:conservationError} for different discretization parameters $N$.
This plot shows that the error of the computed Rayleigh coefficients 
corresponding to propagating modes converges with order 1, exactly as the 
error of the solutions in $H^{1/2}_{\per}$. This seems natural since, 
first, the Rayleigh coefficients are obtained from the numerical solution 
$u_N$ by integration over the line $\Gamma_{\rho} = (-\pi,\pi) \times \{ \rho \}$ 
and, second, the trace theorem states that the mapping 
$u_N \mapsto \left. u_N \right|_{\Gamma_\rho}$ is bounded from 
$H^{s}_{\per}(\Omega_\RR)$ into $L^2(\Gamma_\rho)$ for $s>1/2$. 
The plot in Figure~\ref{fig:7}(b) further shows a slight 
instability around a Wood's anomaly at the angle $\theta = \arccos(1-1/(2.5)) \approx 0.927$, 
as it is going to be expected from Remark~\ref{eq:wood}. (The sampling points 
naturally avoid the exact value of the angle corresponding to this Wood's anomaly.) 
% The instability occurs at the bend of the error curves in Figure~\ref{fig:7}(b) 
% at the angle $\theta \approx 0.927$. 

\begin{figure}[!ht]
\centering
\subfloat[]{\includegraphics[width = 7.8cm, height = 7cm]{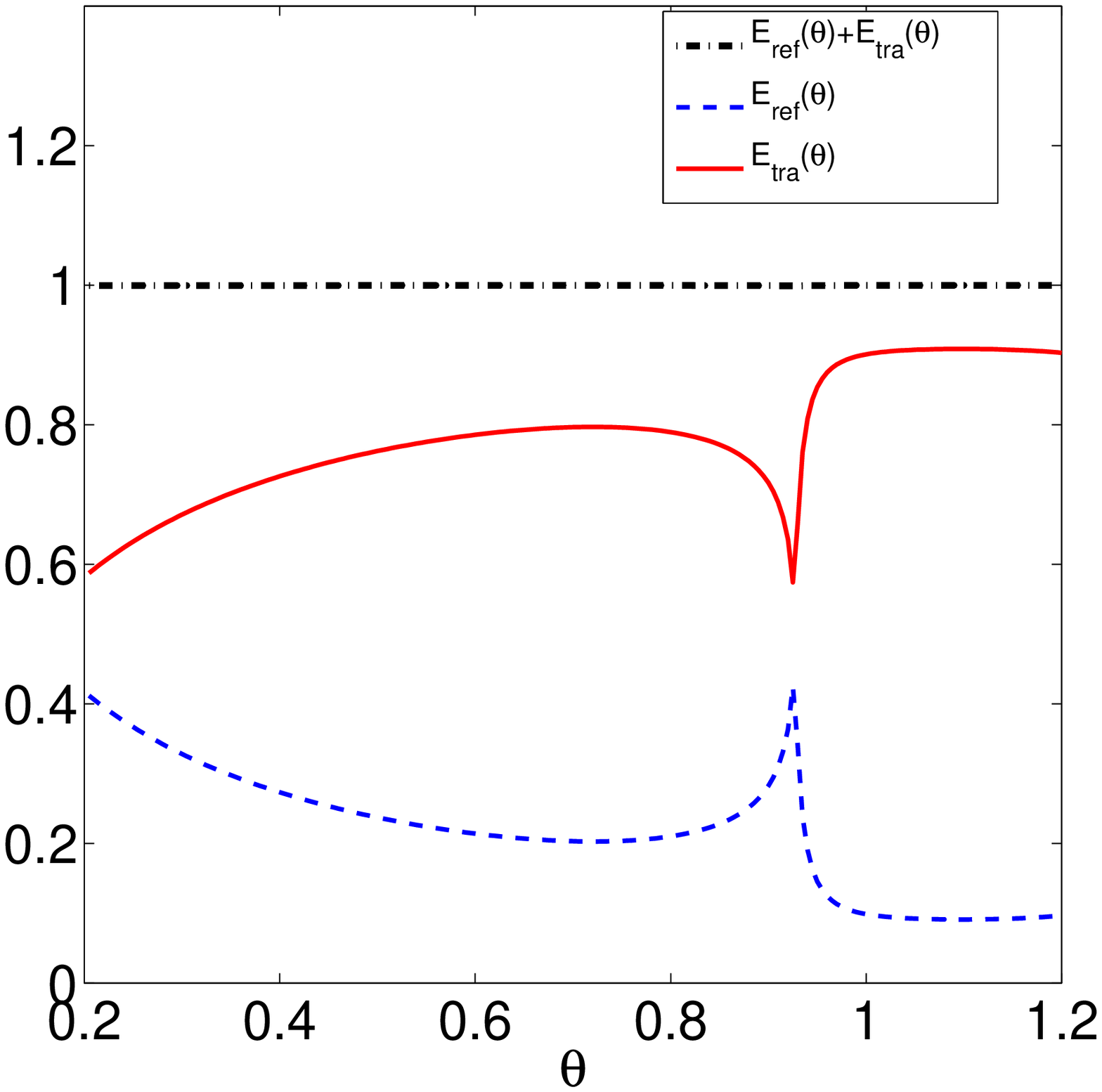}}
\subfloat[]{\includegraphics[width = 7.8cm, height = 7cm]{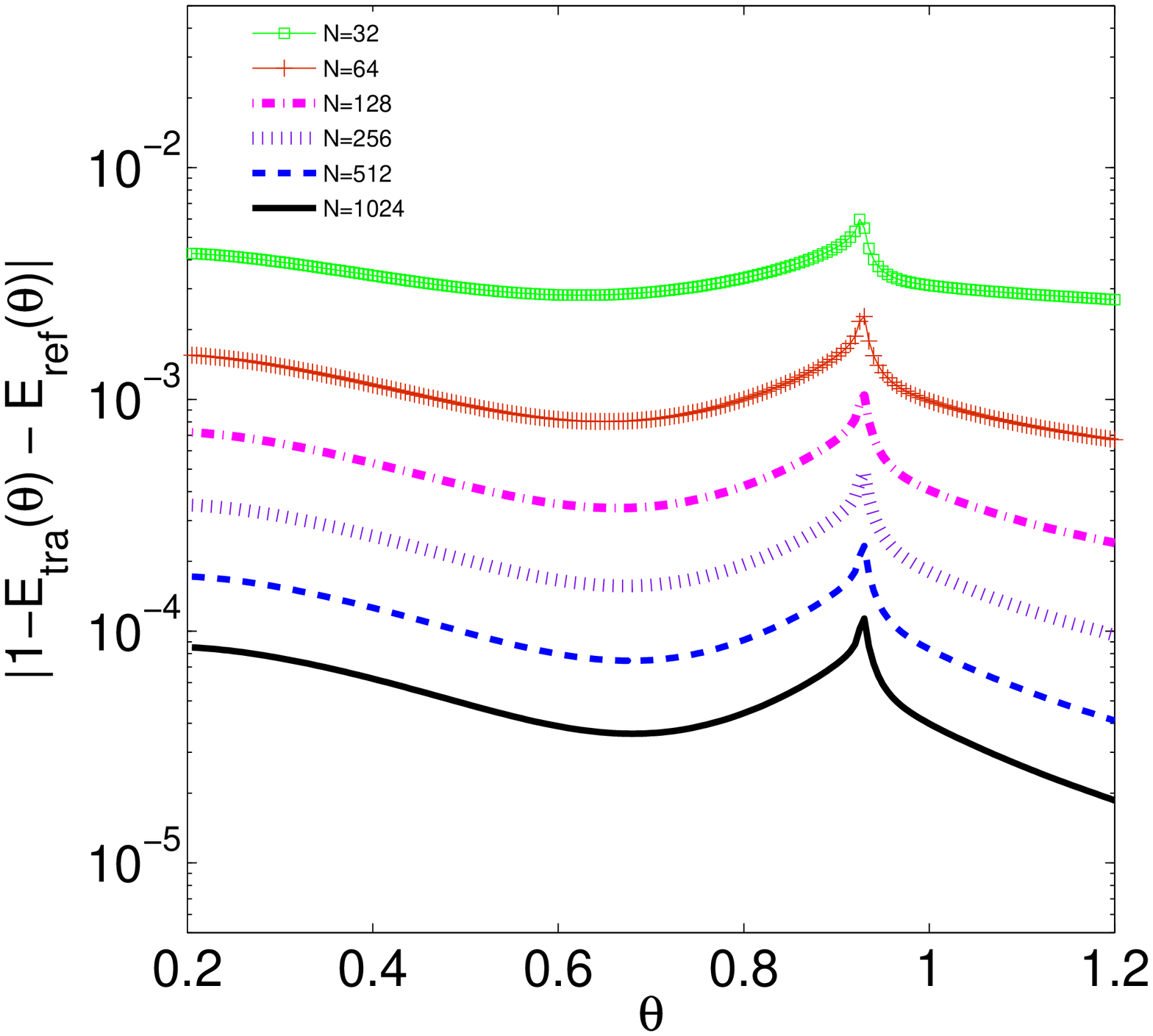}}
\caption{(a) Reflected energy curves (low dashed line) and transmitted 
energy curves (continuous line) plotted against the angle $\theta$ of the 
incident plane wave $u^i$. The two curves sum up to one, as they should due to~\eqref{eq:energyEq}. 
(b) The error criterion~\eqref{eq:conservationError} plotted for different discretization 
parameters $N = 2^n$, $n=5, \dots, 10$, versus the angle $\theta$ of the incident 
plane wave. (The order of the curves from top to bottom corresponds to the 
increasing discretization parameter $N$.)}
\label{fig:7}
\end{figure}

\end{document}